\newtheorem{thm}{Theorem}[section]
\newtheorem{lem}[thm]{Lemma}
\theoremstyle{definition}
\newtheorem{defn}[thm]{Definition}
\newtheorem{rmk}[thm]{Remark}
\newtheorem{condL}{Condition (L)}
\newcommand{\sA}{\mathscr{A}}
\newcommand{\sH}{\mathscr{H}}
\newcommand{\cN}{\mathcal{N}}
\newcommand{\cO}{\mathcal{O}}
\newcommand{\C}{\mathbb{C}}
\newcommand{\K}{\mathbb{K}}
\newcommand{\Q}{\mathbb{Q}}
\newcommand{\fkN}{\mathfrak{N}}
\newcommand{\fkp}{\mathfrak{p}}
\newcommand{\Fb}{{\overline{F}}}
\newcommand{\Kb}{{\overline{K}}}
\newcommand{\Fom}{E}
\newcommand{\Gamh}{\widehat{\Gamma}}
\newcommand{\GL}{\mathrm{GL}}
\newcommand{\SL}{\mathrm{SL}}
\newcommand{\GSp}{\mathrm{GSp}}
\DeclareMathOperator{\Ker}{\mathrm{Ker}}
\DeclareMathOperator{\Gal}{\mathrm{Gal}}
\DeclareMathOperator{\Aut}{\mathrm{Aut}}
\DeclareMathOperator{\nInd}{\mathrm{n-Ind}}
\DeclareMathOperator{\Hom}{\mathrm{Hom}}
\DeclareMathOperator{\Ext}{\mathrm{Ext}}
\DeclareMathOperator{\Res}{\mathrm{Res}}
\DeclareMathOperator{\rec}{\mathrm{rec}}
\DeclareMathOperator{\ad}{\mathrm{ad}}
\newcommand{\LLC}{\mathrm{LLC}}
\newcommand{\BC}{\mathrm{BC}}
\newcommand{\Tr}{\mathrm{Tr}}
\newcommand{\St}{\mathrm{St}}
\newcommand{\Sym}{\mathrm{Sym}}
\newcommand{\Std}{\mathbf{Std}}
\newcommand{\id}{\mathrm{id}}
\DeclareMathOperator{\Rep}{\mathrm{Rep}}
\DeclareMathOperator{\Mod}{\mathrm{Mod}}
\newcommand{\isomto}{\xrightarrow{\sim}}
\newcommand{\cln}{\colon}
\newcommand{\immto}{\hookrightarrow}
\DeclareMathOperator*{\tensor}{\otimes}
\newcommand{\veps}{\varepsilon}
\newcommand{\vpi}{\varpi}
\DeclareMathOperator*{\Lsum}{\boxplus}
\title{The infinite base change lifting associated to an APF extension of a $p$-adic field}
\author{Megumi Takata}
\begin{document}
	
	\maketitle
	
	\begin{abstract}
		In this paper, we construct a base change lifting for an APF extension of a mixed characteristic local field.
	\end{abstract}
	
	\section{Introduction}
	Let $p$ be a prime number.
	In this paper, we shall construct a local base change lifting
	for an almost pro-$p$ cyclic extension of infinite degree.
	The point is that the local base change lifting for a totally ramified extension
	coincides with an operation coming from the close local fields theory of Kazhdan
	under some conditions.
	
	We state the result more precisely.
	For a local field $L$ with a finite residue field, we denote by $\sA(\GL_N(L))$ the set of isomorphism classes of irreducible smooth representations of $\GL_N(L)$ over $\C$.
	We denote the Weil group of $L$ by $W_L$.
	We recall that an L-parameter of $\GL_N(L)$ is a group homomorphism $\phi \cln W_L\times\SL_2(\C) \to \GL_N(\C)$
	such that $\phi|_{W_L}$ is semi-simple and smooth and $\phi|_{\SL_2(\C)}$ is algebraic.
	Let $\Phi(\GL_N(L))$ denote the set of isomorphism classes of L-parameters of $\GL_N(L)$.	
	We note that $\Phi(\GL_1(L))$ is equal to the set $\Hom(L^\times,\C^\times)$ of smooth characters of $L^\times$.
	We denote by $\LLC_L$ the local Langlands correspondence of $\GL_N$ over $L$,
	whose existence was firstly proven by \cite{LRS1993} for $L$ of positive characteristic and by \cite{HT2001} for $L$ of characteristic zero.
	Let $F$ be a finite extension of $\Q_p$ and $\Fom$ an APF extension of infinite degree,
	in particular an almost pro-$p$ extension.
	Let $F_\infty$ be the field of norms associated with $\Fom/F$.
	We denote by $\Res_{\infty/0}$ the restriction map $\Phi(\GL_N(F)) \to \Phi(\GL_N(F_\infty))$
	with respect to the natural injection $W_{F_\infty} \immto W_F$.
	
	\begin{thm}\label{thm0}
		Suppose that the extension $\Fom/F$ is cyclic.
		Then we can construct a map $\BC_{\infty/0} \cln \sA(\GL_N(F)) \to \sA(\GL_N(F_\infty))$
		such that the following diagram is commutative:
		\[
		\xymatrix{
			\sA(\GL_N(F_\infty)) \ar[r]^{\LLC_{F_\infty}}  & \Phi(\GL_N(F_\infty)) \\
			\sA(\GL_N(F)) \ar[u]^{\BC_{\infty/0}} \ar[r]^{\LLC_F} & \Phi(\GL_N(F)) \ar[u]^{\Res_{\infty/0}}.
		}
		\]			
	\end{thm}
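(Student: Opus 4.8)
The plan is to realise $\BC_{\infty/0}$ as a limit of the classical finite cyclic base change liftings, transported to $F_\infty$ through Kazhdan's theory of close local fields. Write $\Fom=\bigcup_{n\ge 0}E_n$ as an increasing union of finite subextensions $E_n/F$, arranged so that the $E_n$ form a tower in which each step $E_{i+1}/E_i$ is cyclic of prime degree; this is possible since $\Fom/F$ is procyclic. At each finite level, Arthur--Clozel base change, obtained by iterating the cyclic prime-degree case along the tower, gives a map $\BC_{n/0}\cln\sA(\GL_N(F))\to\sA(\GL_N(E_n))$, and the compatibility of $\LLC$ with cyclic base change (a standard property of the correspondence for $\GL_N$) yields $\LLC_{E_n}\circ\BC_{n/0}=\Res_{n/0}\circ\LLC_F$, where $\Res_{n/0}$ is restriction of $L$-parameters along $W_{E_n}\immto W_F$. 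On the other side, the APF hypothesis provides, for $n$ large, integers $c_n$ with $c_n\to\infty$ such that $F_\infty$ and $E_n$ are $c_n$-close, i.e.\ $\cO_{F_\infty}/\fkp_{F_\infty}^{c_n}\cong\cO_{E_n}/\fkp_{E_n}^{c_n}$. Kazhdan's close-fields equivalence, together with its known compatibility with $\LLC$, then furnishes for each such $n$ a bijection $\iota_n$ from the subset of $\sA(\GL_N(E_n))$ consisting of representations of level bounded in terms of $c_n$ onto the corresponding subset of $\sA(\GL_N(F_\infty))$, matching via $\LLC_{E_n}$ and $\LLC_{F_\infty}$ the Deligne isomorphism $\delta_n$ on $L$-parameters of bounded level, which is itself induced by the canonical isomorphism $W_{F_\infty}/W_{F_\infty}^{(c_n)}\isomto W_{E_n}/W_{E_n}^{(c_n)}$ of Weil groups modulo ramification.

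Given $\pi\in\sA(\GL_N(F))$, set $\phi=\LLC_F(\pi)$; then $\Res_{\infty/0}(\phi)=\phi|_{W_{F_\infty}}$ is an $L$-parameter of $\GL_N(F_\infty)$, hence has a finite conductor depending only on $\pi$. Choose $n$ large enough that $\Res_{n/0}(\phi)$ — equivalently $\BC_{n/0}(\pi)$ — lies in the level range on which $\delta_n$, respectively $\iota_n$, is defined; the existence of such $n$ is again part of the APF input, the ramification of the tower not outrunning the closeness $c_n$. I then define
\[
\BC_{\infty/0}(\pi):=\iota_n\bigl(\BC_{n/0}(\pi)\bigr)\in\sA(\GL_N(F_\infty)).
\]
Independence of the choice of $n$ follows by applying the injective map $\LLC_{F_\infty}$: for two admissible indices $n'>n$ one obtains $\delta_{n'}(\Res_{n'/0}(\phi))$ and $\delta_n(\Res_{n/0}(\phi))$ respectively, and both equal $\Res_{\infty/0}(\phi)$ by the compatibility discussed below, so the two candidates coincide. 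The same computation yields the diagram of Theorem~\ref{thm0}: $\LLC_{F_\infty}(\BC_{\infty/0}(\pi))$ equals $\delta_n(\LLC_{E_n}(\BC_{n/0}(\pi)))=\delta_n(\Res_{n/0}(\phi))$, which is $\Res_{\infty/0}(\phi)=\Res_{\infty/0}(\LLC_F(\pi))$.

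The crux — the step I expect to be the main obstacle — is the compatibility just used, namely that $\delta_n\circ\Res_{n/0}=\Res_{\infty/0}$ on $L$-parameters of level at most $c_n$. Unwinding the definitions, this asserts that the composite
\[
W_{F_\infty}\isomto W_{\Fom}\immto W_{E_n}\twoheadrightarrow W_{E_n}/W_{E_n}^{(c_n)},
\]
formed from the field-of-norms isomorphism, the inclusion (valid since $E_n\subseteq\Fom$), and the projection, coincides with the reduction of the Deligne close-fields isomorphism $W_{F_\infty}/W_{F_\infty}^{(c_n)}\isomto W_{E_n}/W_{E_n}^{(c_n)}$. This is precisely where Fontaine--Wintenberger's field of norms must be reconciled with Kazhdan's close fields; it is a ramification-theoretic statement, which I would prove by comparing the two constructions through the explicit description of higher ramification in an APF tower and Wintenberger's theorem that the field-of-norms isomorphism respects the ramification filtrations. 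A secondary, more technical, point is the quantitative bookkeeping behind ``$n$ large enough'': one must check that the conductor of $\Res_{n/0}(\phi)$ stays within the level range of $\iota_n$ for all sufficiently large $n$, which is exactly where the APF condition — rather than the weaker almost-pro-$p$ property — is needed. Finally, since $\LLC_{F_\infty}$ is a bijection, the commutative diagram already characterises $\BC_{\infty/0}$ uniquely, so the real content of the theorem is that the close-fields construction above produces a well-defined map at all — which is what makes precise the assertion of the introduction that the infinite base change lifting is the Kazhdan close-fields operation.
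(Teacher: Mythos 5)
Your proposal follows the paper's strategy exactly: build $\BC_{\infty/0}$ as the composite of a finite-level Arthur--Clozel base change $\BC_{n/0}$ with Kazhdan's close-fields transport from $F_n$ to $F_\infty$, and obtain the commuting square by passing to $L$-parameters through the ABPS compatibility of Kazhdan's correspondence with $\LLC$. However, the step you yourself single out as the crux and leave as a sketch --- that the Deligne close-fields isomorphism on Galois groups agrees, modulo ramification, with the natural inclusion $W_{F_\infty}\immto W_{F_n}$ coming from the field of norms --- is precisely the paper's Lemma \ref{ResDel}, and proving it is not a matter of merely invoking Wintenberger's compatibility of the field-of-norms functor with ramification. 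The paper needs the \emph{strictly stronger} bound $l\le (2p)^{-1}(p-1)\,i(L/K)$, a factor of two beyond what makes the fields $l$-close, and an explicit Herbrand-function estimate $\psi_{K'/K}^{-1}\bigl(\tfrac12 i(L/K)r\bigr)\le i(L/K)$ to show that $i(L'/K')\ge \tfrac12 i(L/K)r$; only then can the Deligne equivalence at level $lr$ over a Galois $K'/K$ be identified with the norm-induced isomorphism $\fkN_{L'/K'}$, at which point the commutativity of norms with $\Gal(L'/L)$ finishes the argument. Your ``secondary technical point'' about quantitative bookkeeping is equally load-bearing: the ABPS theorem applies only for $l' < 2^{-N}l$, which is why the paper imposes the additional condition $l'_n < 2^{-N}\lfloor p^{-1}(p-1)\psi_{\Fom/F}(b_n)\rfloor$ on the sequence of levels in Theorem \ref{thm2}; without tracking this exponent your ``$n$ large enough'' is not justified. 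So the architecture is right and matches the paper, but both places where you defer to a plan rather than a proof are exactly where the paper's technical content lives, and neither reduces to a soft appeal to Wintenberger.
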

	
	We shall call $\BC_{\infty/0}$ \textit{the base change lifting of infinite degree}.
	We construct $\BC_{\infty/0}$ by using Arthur and Clozel's result \cite{AC1989} and close fields theory of Kazhdan \cite{Kaz1986}.
	Hence our construction is basically on the representation theory of $p$-adic groups,
	that is to say, the automorphic side.
	However, we use $\LLC$ when we prove Theorem \ref{thm2}
	by showing the corresponding statement in the terms of L-parameters,
	that is, the Galois side.
	The author expects that in the future we will be able to avoid such arguments.
		
	To construct the lifting, we shall adapt Kazhdan's theory to our setting.
	Let $L$ be a local field with a finite residue field,
	$\cO \subset L$ the ring of integers, and $\fkp \subset \cO$ the maximal ideal.
	Let $\K_l(L)$ denote the principal congruence subgroup of level $l$ of $\GL_N(L)$:
	\[
		\K_l(L) = \Ker (\GL_N(\cO) \to \GL_N(\cO/\fkp^l)).
	\]
	We denote by $\Rep(\GL_N(L))$ the category of admissible smooth representations of $\GL_N(L)$
	and by $\Rep_l(\GL_N(L))$ the full subcategory of $\Rep(\GL_N(L))$
	consisting of representations generated by their $\K_l(L)$-fixed vectors.
	
	We fix an algebraic closure $\Fb$ of $\Fom$.
	For any real number $v \geq -1$, we denote by $\Gal(\Fb/F)^v$ the $v$-th ramification group in upper numbering.
	Let $b_1 < b_2 < \cdots$ be the ramification breaks of $\Fom/F$.
	We put 
	\[
		F_n = \Fb^{\Gal(\Fb/\Fom)\Gal(\Fb/F)^{b_n}}.
	\]
	For a real number $u \geq 0$, we define
	\[
	\psi_{\Fom/F}(u) =
		\int_{0}^{u} (\Gal(\Fb/F) : \Gal(\Fb/\Fom)\Gal(\Fb/F)^v) dv.
	\]
	We take a non-decreasing sequence of non-negative integers $\{ l_n\}_{n=1}^\infty$
	satisfying the following:
	
	\begin{condL}
	$l_n \to \infty$ ($n\to \infty$) and
	$l_n \leq p^{-1}(p-1) \psi_{\Fom/F}(b_n).$
	\end{condL}
	
	Then we have a theorem that $\Rep(\GL_N(F_\infty))$ can be obtained by taking the limit of certain subcategories of $\Rep(\GL_N(F_n))$:
	
	\begin{thm}\label{thm1}
		For any indices $1 \leq n < m \leq \infty$, there exists a natural equivalence of categories
		\[
			A_{m/n} \cln \Rep_{l_n}(\GL_N(F_n)) \isomto \Rep_{l_n}(\GL_N(F_m)).
		\]
		This $\{A_{m/n} \mid 0\leq n < m \leq \infty \}$ makes the diagram
		\begin{equation}\label{diag1}
			\begin{split}
				\xymatrix{
					\Rep_{l_n}(\GL_N(F_n)) \ar@{->}[r]^{A_{m'/n}} \ar@{->}[d]_{A_{m/n}} &
						\Rep_{l_n}(\GL_N(F_{m'})) \ar@{^(->}[dd]\\
					\Rep_{l_n}(\GL_N(F_m)) \ar@{^(->}[d] & \\
					\Rep_{l_m}(\GL_N(F_m)) \ar@{->}[r]_{A_{m'/m}} & \Rep_{l_m}(\GL_N(F_{m'}))
				}
			\end{split}
		\end{equation}
		commute for any $n \leq m \leq m'$.
		We also denote by $A_{m/n}$ the bijection
		\[
			\sA_{l_n}(\GL_N(F_n)) \isomto \sA_{l_n}(\GL_N(F_m))
		\]
		induced by the equivalence $A_{m/n}$.
		Then we can take the direct limit of $\{A_{\infty/n}\}_n$:
		\[
			\varinjlim_{n}A_{\infty/n} \cln \varinjlim_{n} \sA_{l_n}(\GL_N(F_n)) \isomto \sA(\GL_N(F_\infty)),
		\]
		which is also bijective.
	\end{thm}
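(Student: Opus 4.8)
The plan is to derive Theorem~\ref{thm1} from Kazhdan's theory of close local fields \cite{Kaz1986}. The key input, which I would establish first and which I expect to be the main obstacle, is that Condition~(L) forces $F_n$ and $F_\infty$ to be \emph{$l_n$-close}: there is a ring isomorphism $\cO_{F_n}/\fkp_{F_n}^{l_n}\isomto\cO_{F_\infty}/\fkp_{F_\infty}^{l_n}$ carrying a uniformizer to a uniformizer, and these isomorphisms form a coherent system as $n$ increases and as one reduces modulo lower powers of the maximal ideal. This is the field-of-norms comparison: $\cO_{F_\infty}$ is by construction the inverse limit $\varprojlim_n\cO_{F_n}$ along the norm maps $N_{F_{n+1}/F_n}$, and modulo $\fkp^l$ this limit stabilizes — and the truncated norm maps become ring isomorphisms — as soon as $l$ stays below a bound dictated by the ramification of the tower, the bound at the $n$-th stage being $p^{-1}(p-1)\,\psi_{\Fom/F}(b_n)$. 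Carrying this out requires controlling how $N_{F_{n+1}/F_n}$ distorts the additive structure modulo powers of $\fkp$; the factor $p^{-1}(p-1)$ is the loss produced by $p$-th power maps on the higher unit filtration, and pinning down this estimate precisely is the heart of the matter. Composing with the analogous (reduced) isomorphism for $F_m$ then yields, for every $n<m<\infty$, a canonical isomorphism $\cO_{F_n}/\fkp^{l_n}\isomto\cO_{F_\infty}/\fkp^{l_n}\otmosi\cO_{F_m}/\fkp^{l_n}$, so that $F_n$ and $F_m$ are $l_n$-close as well.

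With closeness in hand I would invoke Kazhdan's comparison theorem: an $l$-closeness isomorphism between local fields $L$ and $L'$ induces an isomorphism of Hecke algebras $\sH(\GL_N(L),\K_l(L))\isomto\sH(\GL_N(L'),\K_l(L'))$, and, through the equivalence $V\mapsto V^{\K_l(L)}$ between $\Rep_l(\GL_N(L))$ and the module category of $\sH(\GL_N(L),\K_l(L))$, an equivalence of categories; applied at level $l_n$ to the isomorphism above it produces $A_{m/n}\cln\Rep_{l_n}(\GL_N(F_n))\isomto\Rep_{l_n}(\GL_N(F_m))$ (for $m=\infty$ one uses the isomorphism of the previous step directly). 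Since $F_n$ has characteristic $0$ and $F_\infty$ has characteristic $p$, this is exactly the mixed-characteristic setting for which Kazhdan's theorem is intended. Commutativity of \eqref{diag1} then follows from two functoriality properties of the construction, both intrinsic to Kazhdan's setup: it is functorial in the closeness isomorphism — so that $A_{m'/m}\circ A_{m/n}=A_{m'/n}$ at level $l_n$, because the canonical field-of-norms isomorphisms compose to the canonical one — and it is compatible with deepening the level — so that the equivalence attached to a closeness isomorphism at level $l_m$, restricted to $\Rep_{l_n}$, is the one attached to its reduction modulo $\fkp^{l_n}$. Given these, the left-hand composite in \eqref{diag1} equals $\iota\circ A_{m'/m}\circ A_{m/n}=\iota\circ A_{m'/n}$, which is the right-hand composite, where $\iota$ stands for the inclusions $\Rep_{l_n}\hookrightarrow\Rep_{l_m}$.

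Finally I would pass to isomorphism classes. Each $A_{m/n}$, being an equivalence, preserves irreducibility and hence induces a bijection $A_{m/n}\cln\sA_{l_n}(\GL_N(F_n))\isomto\sA_{l_n}(\GL_N(F_m))$, where $\sA_l(\GL_N(L))$ denotes the set of isomorphism classes of irreducible smooth representations of $\GL_N(L)$ with a nonzero $\K_l(L)$-fixed vector (equivalently, the irreducible objects of $\Rep_l(\GL_N(L))$). The maps $\sA_{l_n}(\GL_N(F_n))\xrightarrow{A_{(n+1)/n}}\sA_{l_n}(\GL_N(F_{n+1}))\immto\sA_{l_{n+1}}(\GL_N(F_{n+1}))$ form a directed system by \eqref{diag1}, with colimit $\varinjlim_n\sA_{l_n}(\GL_N(F_n))$, and the maps $\sA_{l_n}(\GL_N(F_n))\xrightarrow{A_{\infty/n}}\sA_{l_n}(\GL_N(F_\infty))\immto\sA(\GL_N(F_\infty))$ are compatible with the transition maps — again by \eqref{diag1} — so they assemble into $\varinjlim_n A_{\infty/n}$. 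For injectivity: if $x\in\sA_{l_n}(\GL_N(F_n))$ and $y\in\sA_{l_{n'}}(\GL_N(F_{n'}))$ have the same image in $\sA(\GL_N(F_\infty))$, choose $m\ge n,n'$; the images of $x$ and $y$ in $\sA_{l_m}(\GL_N(F_m))$ are sent by the injection $A_{\infty/m}$ to the same element of $\sA_{l_m}(\GL_N(F_\infty))$, hence coincide, so $x$ and $y$ become equal in the colimit. For surjectivity: any irreducible smooth representation $\pi$ of $\GL_N(F_\infty)$ has $\pi^{\K_l(F_\infty)}\neq 0$ for some $l$ by smoothness, and since $l_n\to\infty$ we may pick $n$ with $l_n\ge l$; then $\K_{l_n}(F_\infty)\subseteq\K_l(F_\infty)$ gives $\pi^{\K_{l_n}(F_\infty)}\neq 0$, so $\pi\in\sA_{l_n}(\GL_N(F_\infty))=A_{\infty/n}(\sA_{l_n}(\GL_N(F_n)))$ lies in the image. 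Hence $\varinjlim_n A_{\infty/n}$ is bijective, completing the proof.
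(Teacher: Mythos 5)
Your proposal follows essentially the same route as the paper: Wintenberger's norm-map isomorphisms $\cO_{F_m}/\fkp_{F_m}^{l_n}\isomto\cO_{F_n}/\fkp_{F_n}^{l_n}$ (valid precisely because Condition (L) keeps $l_n$ below $p^{-1}(p-1)i(F_m/F_n)$), the induced Hecke-algebra isomorphisms, Bernstein's equivalence $V\mapsto V^{\K_{l_n}}$, and transitivity of norms for the commutativity of the diagram --- the only structural difference being that the level-compatibility you assert as ``intrinsic to Kazhdan's setup'' is isolated in the paper as a separate lemma (Lemma \ref{corCdeBer}) with a short Bernstein-style proof. One attribution matters mathematically: the statement you invoke, that $l$-closeness alone already yields an isomorphism of the level-$l$ Hecke algebras, is not Kazhdan's original theorem (which requires $m$-closeness for some $m\gg l$ and would not suffice under Condition (L) as stated) but Lemaire's refinement for $\GL_N$ \cite{Lem2001}, which is what the paper actually uses.
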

	
	Next, we shall prove that $A_{m/n}$ coincides with the base change lifting.
	For a cyclic extension $F'/F$ of prime degree, let
	\[
		\BC_{F'/F} \cln \sA(\GL_N(F)) \to \sA(\GL_N(F'))
	\]
	be the base change lifting
	in the sense of \cite[Chapter 1, Section 6]{AC1989}.
	For a general cyclic extension $F'/F$ of finite degree, we define $\BC_{F'/F}$ as the composite of the base changes attached to intermediate exetnsions of $F'/F$ of prime degree.
	In particular, we write $\BC_{F_m/F_n} = \BC_{m/n}$.
	We denote by $\sA_l(\GL_N(F))$ the subset of $\sA(\GL_N(F))$ consisting of representations
	which have a non-trivial $\K_l(F)$-fixed vector.
	
	In the rest of this section, we suppose that $E/F$ is cyclic.
	We put $\Gamma = \Gal(E/F)$ and denote by $\Gamh$ the group of smooth characters of $\Gamma$ with valued in $\C^\times$.
	By local class field theory, we identify an element of $\Gamh$
	with a character $F^\times \to \C^\times$ which factors through $F^\times/N_{F_n/F}(F_n^\times)$ for some $n$.
	\begin{thm}\label{thm2}
		We take a sequence $\{l'_n\}_{n=1}^\infty$ satisfying the condition {\rm (L)} and
		such that there exists a positive integer $n_0$ such that
		$l'_n < 2^{-N}\lfloor p^{-1}(p-1)\psi_{\Fom/F}(b_n) \rfloor$ for any $n \geq n_0$.
		\begin{itemize}
		\item[(i)]
		For any indices $n_0 \leq n \leq m < \infty$,
		the bijection
		\[
			A_{m/n} \cln \sA_{l'_n}(\GL_N(F_n)) \isomto \sA_{l'_n}(\GL_N(F_m))
		\]
		coincides with the base change lifting $\BC_{m/n} = \BC_{F_m/F_n}$.
		\item[(ii)]
		For any $\pi \in \sA({\GL_N(F)})$, there exists an integer $n \geq 0$ such that
		\[\BC_{n/0}(\pi) \in \sA_{l'_n}(\GL_N(F_n)).\]
		\end{itemize}
	\end{thm}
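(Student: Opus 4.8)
The plan is to prove both statements by transporting $\BC$ and $A_{m/n}$ to the Galois side via $\LLC$. The one external input I will use is the compatibility of finite base change with $\LLC$: for a finite cyclic extension $F'/F$,
\[
	\LLC_{F'}\circ\BC_{F'/F}=\Res_{F'/F}\circ\LLC_F ,
\]
where $\Res_{F'/F}\cln\Phi(\GL_N(F))\to\Phi(\GL_N(F'))$ is restriction along $W_{F'}\immto W_F$; this is a consequence of the characterization of base change in \cite{AC1989} and of the construction of $\LLC$ in \cite{HT2001} (together with \cite{LRS1993}, which we also need over $F_\infty$). Writing $\Res_{m/n}$ for the corresponding map $\Phi(\GL_N(F_n))\to\Phi(\GL_N(F_m))$ and using injectivity of $\LLC_{F_m}$, statement (i) becomes the assertion
\[
	\LLC_{F_m}\circ A_{m/n}=\Res_{m/n}\circ\LLC_{F_n}\qquad\text{on }\sA_{l'_n}(\GL_N(F_n)).
\]

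I would first check that the hypothesis on $l'_n$ makes the two sides maps between sets of the same size, which is the parameter-side shadow of the bijectivity in Theorem~\ref{thm1}. Any nontrivial character of $\Gal(F_m/F_n)$ has conductor at least one plus the first ramification break of $F_m/F_n$ with respect to $F_n$, and the APF hypothesis on $\Fom/F$ forces this break to be at least $\psi_{\Fom/F}(b_n)$, hence larger than $l'_n$; therefore base change is injective on $\sA_{l'_n}(\GL_N(F_n))$. Dually, a parameter in $\Phi(\GL_N(F_m))$ of conductor $\le l'_n$ is automatically $\Gal(F_m/F_n)$-stable --- conjugation by $W_{F_n}^{(u)}$, which surjects onto $\Gal(F_m/F_n)$ for $u$ below that break, acts trivially on the relevant finite quotient of $W_{F_m}$ --- so it extends to $W_{F_n}$ and lies in the image of $\Res_{m/n}$; and since $\varphi_{F_m/F_n}$ is the identity below that break, $\Res_{m/n}$ neither increases the conductor nor, on this range, identifies distinct parameters. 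The extra factor $2^{-N}$ in the hypothesis is the room lost when one runs this argument through the at most $N$ characters occurring in the Zelevinsky--Langlands datum of a general parameter, each reduction step costing a bounded factor.

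Next I would reduce the displayed identity through the field of norms. By the transitivity of $\{A_{m/n}\}$ in diagram (\ref{diag1}) (with $m'=\infty$) one has $A_{m/n}=A_{\infty/m}^{-1}\circ A_{\infty/n}$ on the relevant subcategory, and restriction of parameters satisfies $R_n=R_m\circ\Res_{m/n}$, where $R_k\cln\Phi(\GL_N(F_k))\to\Phi(\GL_N(F_\infty))$ is restriction along the inclusion $W_{F_\infty}\immto W_{F_k}$ furnished by the field of norms. Since $W_{F_\infty}$ surjects onto the finite quotient of $W_{F_k}$ through which the conductor-$\le l'_n$ parameters factor (again because $\Fom/F_k$ is APF), $R_k$ is injective on those parameters; so it is enough to prove $\LLC_{F_\infty}\circ A_{\infty/k}=R_k\circ\LLC_{F_k}$ for $k=n$ and $k=m$, the rest being bookkeeping with the two diagrams. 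This last identity says precisely that Kazhdan's equivalence of \cite{Kaz1986} for the close local fields $F_k$ and $F_\infty$ is compatible with $\LLC$, the close-fields identification of truncated Weil groups being the Fontaine--Wintenberger one. For $N=1$ this is local class field theory for close fields; in general I would reduce to it using Zelevinsky's classification and the compatibility of $\LLC$, of $A_{\infty/k}$, and of restriction of parameters with parabolic induction and with character twists, the residual essentially square-integrable case being treated via Kazhdan's explicit isomorphism of the finite congruence quotients through which the low-level representations and their parameters factor. I expect this step --- matching Kazhdan's close-fields equivalence with restriction of L-parameters, uniformly in the level --- to be the main obstacle.

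For (ii), set $\phi=\LLC_F(\pi)$, which factors through $\Gal(K/F)$ for a finite Galois $K/F$, up to an unramified twist that does not affect the argument. Choose $n_1$ with $K\cap\Fom\subseteq F_{n_1}$. A standard property of APF extensions is that the ramification breaks of $KF_n/F_n$ stay bounded as $n\to\infty$; equivalently, via the field of norms, the depth of $\phi|_{W_{F_n}}$ with respect to $F_n$ converges to that of $\phi|_{W_{F_\infty}}$. Hence the conductor of $\Res_{n/0}(\phi)=\phi|_{W_{F_n}}$ is bounded by a constant depending only on $\pi$. By the compatibility recalled at the outset, $\BC_{n/0}(\pi)$ is the representation of $\GL_N(F_n)$ with parameter $\phi|_{W_{F_n}}$, and such a representation has a nonzero $\K_l(F_n)$-fixed vector as soon as $l$ exceeds a bound depending only on that constant and on $N$. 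Since $l'_n\to\infty$, any sufficiently large $n\ge n_0$ gives $\BC_{n/0}(\pi)\in\sA_{l'_n}(\GL_N(F_n))$, which is (ii).
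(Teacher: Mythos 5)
Your plan for (i) locates exactly the right crux --- that Kazhdan's close-fields equivalence should be compatible with restriction of L-parameters via $\LLC$ --- but then stops there, acknowledging it as ``the main obstacle'' and offering only a vague reduction through Zelevinsky's classification. That is the gap. The paper does not prove this in one stroke; it factors it into two independent inputs. First, it cites \cite[Theorem 6.1]{ABPS2014}, which gives that Kazhdan's isomorphism $\beta^*$ is compatible via $\LLC$ with the map $\gamma^*$ on L-parameters coming from \emph{Deligne's} close-fields identification of truncated absolute Galois groups (this theorem, not a count of characters in a Zelevinsky datum, is what forces the $2^{-N}$ shrinkage in the hypothesis on $l'_n$ --- your proposed explanation of the factor is not what is going on). Second, the paper proves its own Lemma~\ref{ResDel} / Lemma~\ref{ResDel2}: when $L/K$ is totally ramified and the close-fields isomorphism is the one induced by the norm map, Deligne's induced map $\fkN_{L/K*}$ on $\Gal(\Kb/L)/\Gal(\Kb/L)^l\to\Gal(\Kb/K)/\Gal(\Kb/K)^l$ agrees with the one coming from the inclusion $\Gal(\Kb/L)\hookrightarrow\Gal(\Kb/K)$, so $\fkN^*_{L/K}$ on parameters is literal restriction. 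Chaining (ABPS) with (ResDel2) and then with the standard $\BC\leftrightarrow\Res$ compatibility gives (i). Your detour through $A_{m/n}=A_{\infty/m}^{-1}\circ A_{\infty/n}$ is also an unnecessary complication once one argues with $\beta^*_{m/n}$ and $\fkN^*_{F_m/F_n}$ directly; and the preliminary discussion of conductors of characters of $\Gal(F_m/F_n)$ and of $\Gal$-stability, while plausible, is not needed.

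Your (ii) is in the right spirit but looser than what is actually required. The paper's argument is a one-line computation: if $\phi\in\Phi_l(\GL_N(F))$ then by the Herbrand identity (\ref{Herbrand}), $W_{F_n}\cap\Gal(\Fb/F)^l=\Gal(\Fb/F_n)^{\psi_{F_n/F}(l)}\supset\Gal(\Fb/F_n)^{\psi_{\Fom/F}(l)}$, so the depth of $\phi|_{W_{F_n}}$ is bounded by the single constant $\psi_{\Fom/F}(l)$ for all $n$, and since $l'_n\to\infty$ one eventually has $\phi|_{W_{F_n}}\in\Phi_{l'_n}(\GL_N(F_n))$; the transfer back to the automorphic side again uses \cite[Theorem 6.1]{ABPS2014}. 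Your appeal to boundedness of ramification breaks of $KF_n/F_n$ is a less direct route to the same bound and, as written, is not justified. In short: the plan is sound in outline, but the two concrete ingredients that make it work --- \cite[Theorem 6.1]{ABPS2014} and the paper's own Lemma~\ref{ResDel2} --- are missing, and the part you flag as hard is precisely the part that must be supplied.
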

	
	Now we can construct the base change lifting $\BC_{\infty/0}$ of infinite degree.
	\begin{defn}
		We define
		\[
		\BC_{\infty/0}\cln\sA({\GL_N(F)}) \to \sA({\GL_N(F_\infty)})
		\]
		by mapping $\pi$ to $A_{\infty/n}\circ \BC_{n/0}(\pi)$,
		where the $n$ is as in Theorem \ref{thm2} (ii).
	\end{defn}
	
	\begin{rmk}
		\begin{itemize}
		\item[(a)]
		By Theorem \ref{thm2} (i), the definition of $\BC_{\infty/0}$ is independent of the choice of $n$.
		\item[(b)]
		As noted above, at present, we can not avoid appealing to the local Langlands correspondence for $\GL_N$ over $F$ to prove Theorem \ref{thm2}.
		\item[(c)]
		The commutativity of the diagram in Theorem \ref{thm0} follows from
		\cite[Theorem 6.1]{ABPS2014} and the compatibility of $\BC$ with $\Res$ via $\LLC$.
		\end{itemize}
	\end{rmk}
	
	Furthermore, we study the structure of the fibers of $\BC_{\infty/0}$.
	Now we recall the Langlands sum following the exposition of \cite[Chapter 1]{HT2001}.
	We take a partition $(N_1,\ldots, N_r)$ of $N$.
	Let $\pi_i \in \sA(\GL_{N_i}(F))$ be an essentially square-integrable representation for each $1\leq i \leq r$.
	Let $s_i$ be the real number such that $|\cdot|^{s_i}$ is the absolute value of the central character of $\pi_i$.
	We reorder $\pi_1, \ldots, \pi_r$ so that $N_1^{-1}s_1 \geq \cdots \geq N_r^{-1}s_r$.
	We denote by $P(N_1,\ldots,N_r)$ the standard parabolic subgroup of $\GL_N(F)$
	whose Levi component is $\GL_{N_1}(F)\times\cdots\times\GL_{N_r}(F)$.
	Then the normalized induction
	\[
		\nInd_{P(N_1,\ldots,N_r)}^{\GL_N(F)}(\pi_1\boxtimes\cdots\boxtimes\pi_r)
	\]
	has a unique irreducible quotient, which we denote by
	$\pi_1 \Lsum \cdots \Lsum \pi_r$ and call the Langlands sum of $\pi_1,\ldots,\pi_r$.
	Each $\pi\in\sA(\GL_N(F))$ can be written as a Langlands sum and the $\pi_1,\ldots,\pi_r$ are uniquely determined up to a permutation.
	
	\begin{thm}\label{thm3}
		Let the notations and assumptions be as in Theorem \ref{thm2}.
		We suppose that $(p,N)=1$.
		\begin{itemize}
			\item[(i)]
			Let $\pi\in\sA(\GL_N(F))$ be an essentially square-integrable representation.
			We put $\pi_\infty = \BC_{\infty/0}(\pi)$.
			Let $\omega_\infty$ denote the central character of $\pi_\infty$.
			Then $\BC_{\infty/0}^{-1}(\pi_\infty)$ has a natural $\Gamh$-torsor structure
			and the map
			\[
			\omega\cln \BC_{\infty/0}^{-1}(\pi_\infty) \to \BC_{\infty/0}^{-1}(\omega_\infty)
			\]
			which maps $\pi'$ to its central character $\omega_{\pi'}$ is bijective.
			\item[(ii)]
			Let $\pi$ be any element of $\sA(\GL_N(F))$.
			We suppose that $p>N$.
			We can write 
			\begin{align*}
			 \pi = & \pi_1\Lsum (\pi_1\tensor \eta_{1,2})\cdots \Lsum (\pi_1\tensor \eta_{1,\mu_1}) & \\
				& \Lsum \cdots\\
				& \Lsum \pi_r\Lsum (\pi_r\tensor \eta_{r,2})\cdots \Lsum (\pi_r\tensor \eta_{r,\mu_r}),
			\end{align*}
			where 
			$\mu_i$ is an integer,
			$\pi_i \in \sA(\GL_{N_i}(F))$ is an essentially square-integrable representation
			for each $1\leq i \leq r$,
			and $\eta_{i,j}$ is an element of $\Gamh$ for each $1\leq i \leq r$ and $2\leq j \leq \mu_i$
			such that $\mu_1 N_1 + \cdots + \mu_r N_r = N$
			and the lifts $\BC_{\infty/0}(\pi_1),\ldots ,\BC_{\infty/0}(\pi_r)$ are all distinct.
			Then the group $\Gamh(\pi) = \Gamh^{\mu_1}\times\cdots\times\Gamh^{\mu_r}$ transitively acts on
			$\BC_{\infty/0}^{-1}(\pi_\infty)$.
			As a homogeneous space of $\Gamh(\pi)$, this is isomorphic to
			\[
				\Gamh(1,\eta_{1,2},\ldots,\eta_{1,\mu_1}) 
				\times\cdots\times
				\Gamh(1,\eta_{r,2},\ldots,\eta_{r,\mu_r}).
			\]
			Here, for $(\eta_1,\ldots,\eta_\mu) \in \Gamh^\mu$,
			we denote by $\Gamh(\eta_1,\ldots,\eta_r)$ the quotient of $\Gamh^\mu$
			by the following equivalence relation:
			Two elements $(\xi_1,\ldots,\xi_\mu)$ and $(\theta_1,\ldots,\theta_\mu)$ in $\Gamh^\mu$
			are equivalent if there exists a permutation $\sigma$ of $\{1,\ldots,\mu\}$ such that
			$\eta_j\xi_j = \eta_{\sigma(j)}\theta_{\sigma(j)}$ for each $j$.
	\end{itemize}
	\end{thm}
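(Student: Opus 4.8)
The plan is to transfer the whole problem to the Galois side and reduce the fibre computations to Clifford theory for the normal subgroup $W_{F_\infty}$ of $W_F$, whose procyclic quotient is $\Gamma$. By Theorem~\ref{thm0}, writing $\phi=\LLC_F(\pi)$, an element $\pi'$ with parameter $\phi'=\LLC_F(\pi')$ lies in $\BC_{\infty/0}^{-1}(\pi_\infty)$ if and only if $\phi'|_{W_{F_\infty}}\cong\phi|_{W_{F_\infty}}$ (restriction leaves the $\SL_2(\C)$-factor untouched, and we suppress it in the notation). Since all parameters in sight are finite-dimensional and smooth, this is Clifford theory for a normal subgroup with procyclic quotient, equivalently for suitable finite quotients, and two facts will be used throughout. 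First, since the finite quotients of $\Gamma$ are cyclic and $\C^\times$ is divisible, the relevant $H^2$ vanishes, so a $W_F$-stable irreducible representation of (a finite layer of) $W_{F_\infty}$ always extends to $W_F$; consequently, if $\rho$ is an irreducible representation of $W_F$ with trivial twisting stabiliser $\{\chi\in\Gamh:\rho\otimes\chi\cong\rho\}$, then $\rho|_{W_{F_\infty}}$ is irreducible and the only irreducibles $\rho'$ of $W_F$ with $\rho'|_{W_{F_\infty}}\cong\rho|_{W_{F_\infty}}$ are the twists $\rho\otimes\chi$, $\chi\in\Gamh$, which are pairwise non-isomorphic. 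Second, $\Gamh$ is a divisible $p$-primary torsion group, so the twisting stabiliser of an irreducible $\rho$ of dimension dividing $N$ is trivial whenever $(p,N)=1$ (the order of such a $\chi$ divides $\dim\rho\mid N$ and is a power of $p$), and $\chi\mapsto\chi^N$ is then an automorphism of $\Gamh$.

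\textbf{Proof of (i).} Being essentially square-integrable, $\pi$ has parameter $\phi=\rho_0\boxtimes\Sym^{k-1}$ with $\rho_0$ irreducible of dimension $N/k$ and $\Sym^{k-1}$ the $k$-dimensional irreducible representation of $\SL_2(\C)$. As $(p,N)=1$, $\rho_0$ has trivial twisting stabiliser, so $\phi|_{W_{F_\infty}}$ is irreducible; hence any $\phi'$ with $\phi'|_{W_{F_\infty}}\cong\phi|_{W_{F_\infty}}$ is of the form $\psi\boxtimes\Sym^{k-1}$ with $\psi|_{W_{F_\infty}}\cong\rho_0|_{W_{F_\infty}}$, and by the Strategy $\psi\cong\rho_0\otimes\chi$ for a unique $\chi\in\Gamh$. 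Thus $\BC_{\infty/0}^{-1}(\pi_\infty)=\{\pi\otimes(\chi\circ\det):\chi\in\Gamh\}$ with pairwise distinct members, a $\Gamh$-torsor under twisting. For the central characters, $\omega_{\pi\otimes(\chi\circ\det)}=\omega_\pi\cdot\chi^{N}$; since $\BC_{\infty/0}$ commutes with $\LLC$-determinants, $\omega_\infty=\BC_{\infty/0}(\omega_\pi)$ and $\BC_{\infty/0}^{-1}(\omega_\infty)=\{\omega_\pi\cdot\chi:\chi\in\Gamh\}$, again a $\Gamh$-torsor. In these coordinates $\omega$ is the map $\chi\mapsto\chi^{N}$, an automorphism of $\Gamh$ by $(p,N)=1$; hence $\omega$ is bijective, and one takes the natural torsor structure on $\BC_{\infty/0}^{-1}(\pi_\infty)$ to be the one it inherits through $\omega$ from $\BC_{\infty/0}^{-1}(\omega_\infty)$.

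\textbf{Proof of (ii).} First, the asserted shape of $\pi$ exists: write $\pi=\sigma_1\Lsum\cdots\Lsum\sigma_s$ as a Langlands sum of essentially square-integrable representations and group the $\sigma_a$ by the value of $\BC_{\infty/0}(\sigma_a)$, which by part (i) is exactly the relation of differing by a twist from $\Gamh$; choosing a representative $\pi_i$ of each occurring class, with multiplicity $\mu_i$, one writes the members as $\pi_i\otimes(\eta_{i,j}\circ\det)$ with $\eta_{i,1}=1$, and then $\sum\mu_iN_i=N$ while the $\BC_{\infty/0}(\pi_i)$ are distinct. Since restriction of parameters is additive and annihilates twists by $\Gamh$ of a single essentially square-integrable piece, $\pi_\infty=\Lsum_i\mu_i\cdot\BC_{\infty/0}(\pi_i)$. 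Now let $\pi'$ have parameter $\phi'$ with $\phi'|_{W_{F_\infty}}\cong\phi|_{W_{F_\infty}}=\bigoplus_i\mu_i\,(\rho_{0,i}|_{W_{F_\infty}})\boxtimes\Sym^{k_i-1}$. Because $p>N\ge N_i$, each $\rho_{0,i}$ has trivial twisting stabiliser, so $\rho_{0,i}|_{W_{F_\infty}}$ is irreducible and $W_F$-stable; hence for any Langlands constituent $\psi\boxtimes\Sym^{m-1}$ of $\phi'$ the $W_F$-stability forces $\psi|_{W_{F_\infty}}$ to be a multiple of a single $\rho_{0,i}|_{W_{F_\infty}}$, the extension argument of the Strategy forces this multiple to be $1$, forces $\psi\cong\rho_{0,i}\otimes\chi$ for some $\chi\in\Gamh$, and forces $m=k_i$. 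Comparing the multiplicity of $\BC_{\infty/0}(\pi_i)$ on the two sides shows $\phi'$ has exactly $\mu_i$ constituents of type $i$; thus $\pi'=\Lsum_i\Lsum_{j=1}^{\mu_i}\pi_i\otimes(\chi^{(i)}_j\circ\det)$ and is determined by the multiset $\{\chi^{(i)}_j\}_j$ for each $i$. Therefore the twisting map
\[
q\cln\Gamh(\pi)=\prod_i\Gamh^{\mu_i}\longrightarrow\BC_{\infty/0}^{-1}(\pi_\infty),\qquad (\zeta^{(i)}_j)_{i,j}\longmapsto\Lsum_{i,j}\pi_i\otimes\bigl((\eta_{i,j}\zeta^{(i)}_j)\circ\det\bigr),
\]
is surjective (this is the transitivity of the $\Gamh(\pi)$-action with base point $\pi$), and $q(\zeta)=q(\xi)$ precisely when, for each $i$, the multisets $\{\eta_{i,j}\zeta^{(i)}_j\}_j$ and $\{\eta_{i,j}\xi^{(i)}_j\}_j$ coincide --- which is exactly the equivalence relation defining $\Gamh(1,\eta_{i,2},\dots,\eta_{i,\mu_i})$. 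This yields the identification $\BC_{\infty/0}^{-1}(\pi_\infty)\cong\prod_i\Gamh(1,\eta_{i,2},\dots,\eta_{i,\mu_i})$ claimed in the statement.

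\textbf{Main difficulty.} The conceptual content is concentrated in the Strategy step; the rest is bookkeeping, but with two delicate points. For (i) one must make the Galois-side reduction and the (implicit) passage to finite layers rigorous --- checking in particular that $\BC_{\infty/0}^{-1}(\pi_\infty)$ is exhausted by its finite stages even though it is infinite --- and confirm the compatibility of $\BC_{\infty/0}$ with $\LLC$-determinants used for $\omega$. For (ii) the delicate part is the multiplicity bookkeeping in the Langlands sum and the precise formulation of the $\Gamh(\pi)$-action: since the constituents of a Langlands sum form only a multiset, the action is best recorded through the orbit map $q$, whose fibres must be identified with the stated equivalence classes. The one genuinely arithmetic input is the divisible $p$-primary structure of $\Gamh$ together with $(p,N)=1$ (resp.\ $p>N$): this is exactly what makes every relevant twisting stabiliser trivial --- equivalently, what makes restriction to $W_{F_\infty}$ preserve irreducibility --- and without it both the torsor statement and the clean product formula fail.
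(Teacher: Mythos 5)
Your route is genuinely different from the paper's. You transfer the whole problem to the Galois side via the commutative square of Theorem \ref{thm0} and run Clifford theory for the normal subgroup $W_{F_\infty}\subset W_F$ with procyclic abelian quotient; the paper stays on the automorphic side: for a supercuspidal $\pi$ it proves transitivity of the $\Gamh$-action by induction along the tower $(F_n)$, using Arthur--Clozel's description of base-change fibres at each layer and extracting $N$-th roots of characters of $F_{n-1}^\times/N_{F_n/F_{n-1}}(F_n^\times)$ (these genuinely have $p$-power order, since the higher layers of the break tower are $p$-extensions), then reduces essentially square-integrable representations to supercuspidals through $\sigma\mapsto\St_m(\sigma)$ and its compatibility with base change, and gets (ii) from the uniqueness of the Langlands sum. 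The transitivity half of your Clifford argument is sound: because the relevant quotients are finite cyclic and $\C^\times$ is divisible, the irreducible parameters of $W_F$ lying over a fixed irreducible of $W_{F_\infty}$ do form a single $\Gamh$-orbit, and this recovers the paper's transitivity statement more conceptually.

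The gap is your assertion that $\Gamh$ is a divisible $p$-primary torsion group. An APF extension is only \emph{almost} pro-$p$: for $F=\Q_p$ ($p$ odd) and $\Fom=\Q_p(\mu_{p^\infty})$ the extension is APF and procyclic with $\Gamma\cong\Z_p^\times\cong\Z/(p-1)\times\Z_p$, so $\Gamh$ contains characters of every order dividing $p-1$. This false claim is the only justification you give for the three load-bearing steps: triviality of the twisting stabiliser of $\rho_0$ (hence irreducibility of $\rho_0|_{W_{F_\infty}}$ and freeness of the action), bijectivity of $\chi\mapsto\chi^N$ (hence of $\omega$), and the multiplicity count behind the product formula in (ii). None of these follows from $(p,N)=1$ alone: with $p\ge 5$, $N=2$ and the extension above, a dihedral supercuspidal of $\GL_2(\Q_p)$ attached to the quadratic subfield of $\Q_p(\mu_p)$ is fixed by twisting by the corresponding order-two element of $\Gamh$, so its stabiliser is nontrivial. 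Your argument (and, it must be said, the paper's own one-line deduction of simplicity from $(p,N)=1$) is only valid when $\Gamma$ is pro-$p$, or more generally when $N$ is also coprime to the order of the prime-to-$p$ part of $\Gamma$; you need to either add that hypothesis or supply a genuinely different argument for the freeness and for the invertibility of the $N$-th power map on $\Gamh$.
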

		
	\begin{rmk}
		We denote the local reciprocity map of $F$ by $\rec_F\cln W_F \to F^\times$.
		For $\phi\in\Phi(\GL_N(F))$, let $\chi_\phi$ denote the determinant character of $\phi$.
		If $p>N$, then Theorem \ref{thm3} shows that, using $\LLC_{F_\infty}$, we can characterize $\LLC_F$ as a map which makes the diagram
		\[
		\xymatrix{
			\Hom(F^\times,\C^\times) \ar[d]^{\rec_F^*}
			& \sA(\GL_N(F)) \ar[l]_(0.45){\omega} \ar[r]^{\BC_{\infty/0}} \ar[d]^{\LLC_F}
			& \sA(\GL_N(F_\infty)) \ar[d]^{\LLC_{F_\infty}} \\
			\Hom(W_F,\C^\times) & \Phi(\GL_N(F)) \ar[l]_(0.45){\chi} \ar[r]^{\Res_{\infty/0}} & \Phi(\GL_N(F_\infty))
		}
		\]
		commute and has the following properties:
		\begin{itemize}
			\item
			a Steinberg representation $\St_m(\sigma)$ maps to the outer tensor product
			\[
				\LLC_F(\sigma)\boxtimes \Sym^{m-1}\Std,
			\]
			where $\Std$ is the standard representation of $\SL_2(\C)$, and
			\item
			a Langlands sum maps to the corresponding direct sum.
		\end{itemize}
	\end{rmk}
	
	
	
	\section{Key lemmas}\label{KeyLemma}
	In this section, we prove an important lemma,
	which is a statement in Galois side corresponding to
	Theorem 1.3 (i) in the automorphic side.
	This is a compatibility of
	the restriction functor of Galois groups with respect to a finite totally ramified extension
	and Deligne's theory of close fields (\cite{Del1984}). 
	We recall Deligne's theory.
	Let $K$ be a local field with a finite residue field
	and $l$ a positive integer.
	We denote the ring of integers of $K$ by $\cO$ and the maximal ideal of $\cO$ by $\fkp$.
	We denote by $\Tr_l(K)$ the triple $(\cO/\fkp^l,\fkp/\fkp^{l+1},\veps)$ attached to $K$,
	where $\veps$ is the composite of the natural maps $\fkp/\fkp^{l+1} \to \fkp/\fkp^l \to \cO/\fkp^l$.
	We fix a separable closure $\Kb$ of $K$.
	Let $\Ext(K)^l$ denote the category of finite separable field extensions $K'$ of $K$ contained in $\Kb$
	such that $\Gal(\Kb/K') \supset \Gal(\Kb/K)^l$.
	Then we can construct a natural equivalence of categories
	\[
	T_K^l \cln \Ext(K)^l \isomto \Ext(\Tr_l(K))^l,
	\]
	where $\Ext(\Tr_l(K))^l$ is the category whose objects are extensions of $\Tr_l(K)$
	which satisfy the condition $\mathrm{C}^l$ in \cite[1.5.4]{Del1984}
	and morphisms are $R(l)$-equivalence classes (\cite[2.3]{Del1984}) of morphisms of $\Ext(\Tr_l(K))$.
	For an object $K'$ of $\Ext(K)^l$, $T_K^l(K')$ is defined to be the extension of triples $\Tr_l(K) \to \Tr_{lr}(K')$
	attached to the field extension $K'/K$, where $r$ is the ramification index of $K'/K$.
	
	We take another local field $K_1$ with finite residue field
	and denote the ring of integers of $K_1$ by $\cO_1$
	and the maximal ideal of $\cO_1$ by $\fkp_1$.
	Recall that $K$ and $K_1$ are called {\it $l$-close}
	if there exists an isomorphism of rings $\cO_1/\fkp_1^l \isomto \cO/\fkp^l$.
	Then we can construct an isomorphism of triples $\gamma \cln \Tr_l(K_1) \isomto \Tr_l(K)$.
	By mapping an extension $\Tr_l(K) \to X$ to 
	$
	\Tr_l(K_1) \xrightarrow{\gamma} \Tr_l(K) \to X
	$
	of $\Tr_l(K_1)$, we obtain an equivalence of categories
	\[
		\gamma^* \cln \Ext(\Tr_l(K))^l \to \Ext(\Tr_l(K_1))^l.
	\]
	
	Now let $L \subset \Kb$ be a finite totally ramified extension of $K$.
	We have
	\begin{equation}\label{Herbrand}
	\Gal(\Kb/L) \cap \Gal(\Kb/K)^u = W_L \cap \Gal(\Kb/K)^u = \Gal(\Kb/L)^{\psi_{L/K}(u)}
	\end{equation}
	\cite[1.1.2]{Win1983}.
	We denote by $i(L/K)$ the largest $i$ satisfying
	\[
		\Gal(\Kb/L)\Gal(\Kb/K)^i = \Gal(\Kb/K).
	\]
	Then for any integer $l \leq p^{-1}(p-1)i(L/K)$, the norm map
	$N_{L/K}$ induces an isomorphism of rings
	$\cO_L/\fkp_L^l \isomto \cO_K/\fkp_K^l$
	(see \cite[Proposition 2.2.1]{Win1983}).
	In particular, $K$ and $L$ are $l$-close.
	Hence there is a canonical isomorphism $\Tr_l(L) \isomto \Tr_l(K)$
	which sends the image of a uniformizer $\vpi_L$ of $L$ in $\fkp_L/\fkp_L^{l+1}$
	to that of $N_{L/K}(\vpi_L)$ in $\fkp_K/\fkp_K^{l+1}$.
	We denote the isomorphism of the triples by $\fkN_{L/K}$.
	
	Now we assume $l\leq p^{-1}(p-1)i(L/K)$.
	Then we have an equivalence of categories
	\[
		\fkN_{L/K}^* \cln \Ext(\Tr_l(K))^l \isomto \Ext(\Tr_l(L))^l.
	\]
	On the other hand, we have a functor
	$\Ext(K) \to \Ext(L)$ which maps an extension $K'$ of $K$ to the composite $K'L$.
	If $K'$ is an object of $\Ext(K)^l$, then by the equalities (\ref{Herbrand}), we have
		\begin{align*}
			\Gal(\Kb/K'L) &= \Gal(\Kb/K')\cap\Gal(\Kb/L) \\
			&\supset \Gal(\Kb/K)^l\cap\Gal(\Kb/L) \\
			&= \Gal(\Kb/L)^{\psi_{L/K}(l)} = \Gal(\Kb/L)^l.
		\end{align*}
	Thus $K'L$ is in $\Ext(L)^l$.
		
	Now we can prove the following lemma:
	
	\begin{lem}\label{ResDel}
		Suppose $l \leq (2p)^{-1}(p-1)i(L/K)$.
		Then the group isomorphism
		\[
			\fkN_{L/K*} \cln \Gal(\Kb/L)/\Gal(\Kb/L)^l \to \Gal(\Kb/K)\Gal(\Kb/K)^l
		\]
		induced by $\fkN_{L/K}$ coincides with the homomorphism which comes from
		the natural injection $\Gal(\Kb/L) \immto \Gal(\Kb/K)$.
	\end{lem}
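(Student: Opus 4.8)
The plan is to realise both $\fkN_{L/K*}$ and the homomorphism induced by $\Gal(\Kb/L)\immto\Gal(\Kb/K)$ as the map on automorphism groups of fibre functors attached to one and the same functor, the ``compositum with $L$'' functor, and then to match that functor with the equivalence underlying $\fkN_{L/K*}$. Recall that $\Ext(K)^l$ is the category of finite subextensions of the (generally infinite) Galois extension $K^{(l)}:=\Kb^{\Gal(\Kb/K)^l}$ of $K$; equipped with the fibre functor $\omega_K\colon K'\mapsto\Hom_K(K',\Kb)$ it has $\Aut(\omega_K)\cong\Gal(\Kb/K)/\Gal(\Kb/K)^l$, and likewise for $L$. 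The hypothesis gives $l<i(L/K)$, hence $\Gal(\Kb/L)\Gal(\Kb/K)^l=\Gal(\Kb/K)$ (definition of $i(L/K)$) and $\Gal(\Kb/L)\cap\Gal(\Kb/K)^l=\Gal(\Kb/L)^l$ (by (\ref{Herbrand})), so $\Gal(\Kb/L)\immto\Gal(\Kb/K)$ induces an isomorphism $g\colon\Gal(\Kb/L)/\Gal(\Kb/L)^l\isomto\Gal(\Kb/K)/\Gal(\Kb/K)^l$. Moreover $L\cap K^{(l)}=K$ and $K^{(l)}/K$ is Galois, so $L$ is linearly disjoint over $K$ from every $K'\in\Ext(K)^l$; consequently the functor $c\colon\Ext(K)^l\to\Ext(L)^l$, $K'\mapsto K'L$ (well defined, as observed just before the lemma), carries a natural isomorphism $\omega_L\circ c\isomto\omega_K$ given by restriction of embeddings (bijective since $[K'L:L]=[K':K]$). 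A direct computation (an element $\tau\in\Gal(\Kb/L)$ acting by post-composition on $\Hom_L(K'L,\Kb)$ restricts to the action of the same $\tau$, viewed in $\Gal(\Kb/K)$, on $\Hom_K(K',\Kb)$) identifies the homomorphism $\Aut(\omega_L)\to\Aut(\omega_K)$ attached to $c$ with $g$.

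Write $E:=(T_L^l)^{-1}\circ\fkN_{L/K}^*\circ T_K^l\colon\Ext(K)^l\to\Ext(L)^l$; by its construction from Deligne's theory, $\fkN_{L/K*}$ is the homomorphism $\Aut(\omega_L)\to\Aut(\omega_K)$ induced by $E$. It therefore suffices to prove that $c$ and $E$ are naturally isomorphic, compatibly with the pertinent comparison isomorphisms of fibre functors; then both induce the same map $\Aut(\omega_L)\to\Aut(\omega_K)$, giving $\fkN_{L/K*}=g$, which is the lemma. Applying $T_L^l$, it is enough to exhibit a natural isomorphism $T_L^l\circ c\isomto\fkN_{L/K}^*\circ T_K^l$ of functors into $\Ext(\Tr_l(L))^l$. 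Fix $K'\in\Ext(K)^l$ and put $e=e(K'/K)$. Since $L$ and $K'$ are linearly disjoint over $K$ and $L/K$ is totally ramified, $K'L/K'$ is totally ramified of degree $[L:K]$ and $e(K'L/L)=e$; hence $T_L^l(K'L)$ is the extension of triples $\Tr_l(L)\to\Tr_{le}(K'L)$, whereas $\fkN_{L/K}^*(T_K^l(K'))$ is $\Tr_l(L)\xrightarrow{\fkN_{L/K}}\Tr_l(K)\to\Tr_{le}(K')$. So the task reduces to identifying $\Tr_{le}(K'L)$ with $\Tr_{le}(K')$ over $\Tr_l(L)$.

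Here the factor $(2p)^{-1}$ enters. Since $K'\in\Ext(K)^l$ and $i(L/K)>l$, we have $\Gal(\Kb/K)^{i(L/K)}\subseteq\Gal(\Kb/K)^l\subseteq\Gal(\Kb/K')$; this (normal) subgroup also satisfies $\Gal(\Kb/L)\Gal(\Kb/K)^{i(L/K)}=\Gal(\Kb/K)$, so Dedekind's modular law together with Herbrand's theorem \cite[1.1.2]{Win1983} gives
\[
\Gal(\Kb/K')=\Gal(\Kb/K'L)\cdot\Gal(\Kb/K')^{\psi_{K'/K}(i(L/K))},
\]
whence $i(K'L/K')\ge\psi_{K'/K}(i(L/K))$. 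All ramification breaks of $K'/K$ being $\le l$, the Herbrand function $\psi_{K'/K}$ is linear of slope $e$ on $[l,\infty)$, so $\psi_{K'/K}(i(L/K))\ge e\,(i(L/K)-l)\ge\tfrac12\,e\,i(L/K)$, using $l\le(2p)^{-1}(p-1)i(L/K)\le\tfrac12\,i(L/K)$. Therefore $le=l\,e\le(2p)^{-1}(p-1)i(L/K)\cdot e\le p^{-1}(p-1)\,i(K'L/K')$, so by \cite[Proposition 2.2.1]{Win1983} the norm $N_{K'L/K'}$ induces a ring isomorphism $\cO_{K'L}/\fkp_{K'L}^{le}\isomto\cO_{K'}/\fkp_{K'}^{le}$, hence an isomorphism of triples $\fkN_{K'L/K'}\colon\Tr_{le}(K'L)\isomto\Tr_{le}(K')$. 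Because $N_{K'L/K'}$ restricts to $N_{L/K}$ on $L$ (linear disjointness again), $\fkN_{K'L/K'}$ intertwines the two structure morphisms from $\Tr_l(L)$, i.e. it is an isomorphism $T_L^l(K'L)\isomto\fkN_{L/K}^*(T_K^l(K'))$ in $\Ext(\Tr_l(L))^l$; functoriality of norms gives naturality in $K'$, and compatibility with the fibre functors is part of Deligne's construction of $T_K^l$ and $T_L^l$. This yields the required natural isomorphism and completes the argument.

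I expect the main obstacle to lie in the last paragraph above: carefully checking that the norm isomorphisms $\fkN_{K'L/K'}$ are coherent as $K'$ ranges over $\Ext(K)^l$ and that they are genuinely compatible, up to $R(l)$-equivalence, both with the structure morphisms $\fkN_{L/K}$ and with the fibre functors underlying $T_K^l$ and $T_L^l$, so that the natural isomorphism of functors upgrades to equality (and not merely conjugacy) of the induced maps on $\Gal(\Kb/\cdot)/\Gal(\Kb/\cdot)^l$. The uniform ramification estimate $i(K'L/K')\ge\psi_{K'/K}(i(L/K))\ge\tfrac12\,e(K'/K)\,i(L/K)$ is precisely what lets the argument run for all $K'$ at once, and is why the hypothesis carries $(2p)^{-1}$ rather than the $p^{-1}$ needed only to define $\fkN_{L/K}$.
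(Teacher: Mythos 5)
Your proposal is correct and takes essentially the same approach as the paper: both rest on the ramification bound $i(K'L/K')\geq\tfrac12\,e(K'/K)\,i(L/K)$ (which is exactly where the factor $(2p)^{-1}$ is used) so that $N_{K'L/K'}$ induces the required isomorphism of triples, and both conclude from the Galois-equivariance of the norm. The compatibility concern you flag in your last paragraph is resolved in the paper by restricting to Galois objects $K'$ and verifying commutativity of a single diagram, which reduces to the identity $N_{K'L/K'}\circ\sigma=\sigma\circ N_{K'L/K'}$ for $\sigma\in\Gal(K'L/L)$.
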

	\begin{proof}
		We take a Galois object $K'$ of $\Ext(K)^l$.
		We put $L' = K'L$.
		We shall construct an isomorphism 
		\[
			\fkN'\cln T_L^l(L') \isomto \fkN_{L/K}^*T_K^l(K')	
		\]
		in $\Ext(\Tr_l(L))^l$ such that the following diagram is commutative:
		\begin{equation}\label{diag3}
		\begin{split}
			\xymatrix{
				\Gal(L'/L) \ar[r]^{\cdot|_{K'}} \ar[d]^{T_L^l} & \Gal(K'/K) \ar[d]^{T_K^l}\\
				\Aut_{\Tr_l(L)}(T_L^l(L')) \ar[rd]_{\ad(\fkN')} & \Aut_{\Tr_l(K)}(T_K^l(K')) \\
				 & \Aut_{\Tr_l(L)}(\fkN_{L/K}^*T_K^l(K')). \ar@{=}[u]\\
				}
		\end{split}
		\end{equation}
		Let $r$ denote the ramification index of $K'/K$.		
		We have $l \leq 2^{-1}i(L/K)$ and
		\[
			\Gal(\Kb/K') \supset \Gal(\Kb/K)^l \supset \Gal(\Kb/K)^{2^{-1}i(L/K)}.
		\]
		Hence we obtain inequalities
		\[
			\psi_{K'/K}^{-1}\left(\frac{1}{2}i(L/K)r\right) 
			\leq \frac{1}{2}i(L/K) + \frac{r-1}{r}\cdot\frac{1}{2}i(L/K) \leq i(L/K).
		\]
		Taking account of \[\Gal(\Kb/L)\Gal(\Kb/K)^{i(L/K)} = \Gal(\Kb/K),\] we have
		\begin{align*}
			& \Gal(\Kb/L')\Gal(\Kb/K')^{2^{-1}i(L/K)r} \\
			= &\Gal(\Kb/L')(\Gal(\Kb/K)^{\psi_{K'/K}^{-1}(2^{-1}i(L/K)r)}\cap\Gal(\Kb/K')) \\
			\supset& \Gal(\Kb/L')(\Gal(\Kb/K)^{i(L/K)}\cap\Gal(\Kb/K')) \\
			=& \Gal(\Kb/L')\cap\Gal(\Kb/K)^{i(L/K)} \\
			=& \Gal(\Kb/K')\cap(\Gal(\Kb/K)^{i(L/K)}\Gal(\Kb/L')) \\
			=& \Gal(\Kb/K').
		\end{align*}
		Hence we obtain $\Gal(\Kb/L')\Gal(\Kb/K')^{2^{-1}i(L/K)r} = \Gal(\Kb/K')$.
		Thus we have $2^{-1}i(L/K)r \leq i(L'/K')$ and the norm map $N_{L'/K'}$ provides an isomorphism
		$\fkN_{L'/K'}\cln \Tr_{lr}(L') \isomto \Tr_{lr}(K')$,
		which makes the diagram
		\[
		\xymatrix{
			\Tr_l(K) \ar[r] \ar[d]^{\fkN_{L/K}} & \Tr_{lr}(K') \ar[d]^{\fkN_{L'/K'}}\\
			\Tr_l(L) \ar[r] & \Tr_{lr}(L')
		}
		\]
		commute.
		Thus $\fkN_{L'/K'}$ is in fact an isomorphism $T_L(L') \isomto \fkN_{L/K}^*T_K(K')$
		in $\Ext(\Tr_l(L))^l$.
		We put $\fkN' = \fkN_{L'/K'}$.
		
		The commutativity of the diagram (\ref{diag3}) follows from
		the equality $N_{L'/K'}\circ \sigma = \sigma\circ N_{L'/K'}$ for any $\sigma \in \Gal(L'/L)$.
		Lemma \ref{ResDel} follows from the diagram (\ref{diag3}).
	\end{proof}
	
	For any real number $l\geq 0$, we define
	\[
	\Phi_l(\GL_N(K)) = \{\phi\in \Phi(\GL_N(K)) \mid \Gal(\Kb/K)^l \subset \Ker\phi\}.
	\]
	By Lemma \ref{ResDel}, we obtain the first key lemma:
	\begin{lem}\label{ResDel2}
		Let $K$ be a local field with a finite residue field
		and $L$ a finite totally ramified extension of $K$.
		Then, for any $l<(2p)^{-1}(p-1)i(L/K)$, the restriction of L-parameters
		\[
		\begin{array}{rcl}
		\Phi_l(\GL_n(K)) & \to & \Phi_l(\GL_n(L)) \\
		\phi & \mapsto & \phi|_{W_L\times\SL_2(\C)}
		\end{array}
		\]
		coincides with the map
		\[
		\begin{array}{lrcl}
		\fkN_{L/K}^*\cln & \Phi_l(\GL_n(K)) & \to & \Phi_l(\GL_n(L)) \\
		& \phi & \mapsto & \phi\circ(\fkN_{L/K*}\times\id_{\SL_2(\C)}).
		\end{array}
		\]
	\end{lem}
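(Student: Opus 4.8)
The plan is to deduce this from Lemma \ref{ResDel} after rephrasing both maps in terms of Weil groups. First I would extract the consequences of the hypothesis. Since $(2p)^{-1}(p-1)<1$ we have $l<i(L/K)$, so by definition of $i(L/K)$,
\[
\Gal(\Kb/L)\Gal(\Kb/K)^l=\Gal(\Kb/K),
\]
and $\psi_{L/K}(u)=u$ for $0\le u\le l$, whence by the equalities (\ref{Herbrand})
\[
W_L\cap\Gal(\Kb/K)^l=\Gal(\Kb/L)\cap\Gal(\Kb/K)^l=\Gal(\Kb/L)^{\psi_{L/K}(l)}=\Gal(\Kb/L)^l.
\]
Because $L/K$ is totally ramified, $W_L$ surjects onto $W_K/I_K$, so $W_LI_K=W_K$; combining this with $I_K=I_L\Gal(\Kb/K)^l$ (obtained from the first displayed equality by intersecting with $I_K$ and using $\Gal(\Kb/K)^l\subseteq I_K$) gives $W_L\Gal(\Kb/K)^l=W_K$. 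Hence the inclusion $W_L\immto W_K$ induces an isomorphism $W_L/\Gal(\Kb/L)^l\isomto W_K/\Gal(\Kb/K)^l$, compatible with the isomorphism $\Gal(\Kb/L)/\Gal(\Kb/L)^l\isomto\Gal(\Kb/K)/\Gal(\Kb/K)^l$ induced by $\Gal(\Kb/L)\immto\Gal(\Kb/K)$. In particular, any $\phi\in\Phi_l(\GL_n(K))$ — which factors through $W_K/\Gal(\Kb/K)^l\times\SL_2(\C)$ — restricts to a parameter that is trivial on $\Gal(\Kb/L)^l$, so both maps in the statement do land in $\Phi_l(\GL_n(L))$.

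Next I would compare the two maps on a fixed $\phi\in\Phi_l(\GL_n(K))$. Write $\overline{\phi}\cln W_K/\Gal(\Kb/K)^l\times\SL_2(\C)\to\GL_n(\C)$ for the homomorphism through which $\phi$ factors. Then $\phi|_{W_L\times\SL_2(\C)}$ is $\overline{\phi}$ precomposed with (the map induced by $W_L\immto W_K$)$\,\times\,\id_{\SL_2(\C)}$, while by definition $\fkN_{L/K}^*\phi=\overline{\phi}\circ(\fkN_{L/K*}\times\id_{\SL_2(\C)})$. So it is enough to show that $\fkN_{L/K*}$, viewed as a map $W_L/\Gal(\Kb/L)^l\to W_K/\Gal(\Kb/K)^l$, agrees with the map induced by $W_L\immto W_K$. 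But $\fkN_{L/K*}$ is, by construction, the isomorphism $\Gal(\Kb/L)/\Gal(\Kb/L)^l\to\Gal(\Kb/K)/\Gal(\Kb/K)^l$ induced by $\fkN_{L/K}$, and Lemma \ref{ResDel} (applicable since $l<(2p)^{-1}(p-1)i(L/K)$ implies $l\le(2p)^{-1}(p-1)i(L/K)$) identifies it with the isomorphism induced by the natural inclusion $\Gal(\Kb/L)\immto\Gal(\Kb/K)$. Since that inclusion restricts to $W_L\immto W_K$, the induced map on the Weil-group quotients is precisely the one induced by $W_L\immto W_K$. Therefore $\fkN_{L/K}^*\phi=\phi|_{W_L\times\SL_2(\C)}$, which is the assertion.

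I do not expect a real obstacle: once Lemma \ref{ResDel} is available, the statement is essentially bookkeeping. The two points needing a little care are (i) that $\fkN_{L/K*}$ genuinely restricts to a map between the Weil-group quotients and there coincides with $W_L\immto W_K$ — this uses Lemma \ref{ResDel} together with $f(L/K)=1$, as above — and (ii) a routine reduction to the case $l\in\Z_{>0}$, where Deligne's triples $\Tr_l$ and the notion of $l$-closeness underlying $\fkN_{L/K}$ are defined.
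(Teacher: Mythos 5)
Your proof is correct and takes the same route as the paper, which simply states ``By Lemma \ref{ResDel}, we obtain the first key lemma'' without elaboration; you have filled in the bookkeeping (passing from Galois-group quotients to Weil-group quotients, and checking both maps land in $\Phi_l(\GL_n(L))$) that the paper leaves implicit.
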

	
	\section{Proof of Theorem \ref{thm1}}\label{proof1}
	In this section, we prove Theorem \ref{thm1}.
	For this, we recall two ingredients.
	One is an equivalence of $\Rep_l(\GL_N(L))$ and the category of representations of some Hecke algebra,
	where $L$ is a local field with a finite residue field.
	The other is Kazhdan's theory of close local fields \cite{Kaz1986}.
	
	We denote by $\sH_l(\GL_N(L))$ the algebra of
	compactly supported $\K_l(L)$-bi-invariant functions on $\GL_N(L)$ with values in $\C$
	whose product is the convolution $*_l$ with respect to the Haar measure $\mu_{\GL_N(L),l}$ on $\GL_N(L)$
	normalized by
	\[
		\mu_{\GL_N(L),l}(\K_l(L)) =1.
	\]
	The characteristic function $e_{\K_l(L)}$ of $\K_l(L)$ is the unity of $\sH_l(\GL_N(L))$.
	The category of $\sH_l(\GL_N(L))$-modules is denoted by $\Mod(\sH_l(\GL_N(L)))$.
	\begin{lem}[{\rm \cite[Corollaire 3.9 (ii)]{Ber1984}}]\label{CdeBer}
		The functor $V\mapsto V^{\K_l(L)}$ gives an equivalence of categories
		\[
			\Rep_l(\GL_N(L)) \to \Mod(\sH_l(\GL_N(L))).
		\]
	\end{lem}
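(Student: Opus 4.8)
The plan is to produce an explicit quasi-inverse and to reduce everything to a statement about an idempotent in an idempotented algebra. Set $A = \sH(\GL_N(L))$ for the full Hecke algebra of compactly supported locally constant $\C$-valued functions on $\GL_N(L)$ under convolution with respect to $\mu_{\GL_N(L),l}$; this $A$ is idempotented, $e := e_{\K_l(L)}$ is an idempotent in it, and $eAe = \sH_l(\GL_N(L))$. Recall the standard equivalence between smooth representations of $\GL_N(L)$ and nondegenerate $A$-modules, under which $V^{\K_l(L)} = eV$ with its convolution action and $\Rep_l(\GL_N(L))$ corresponds to the modules $M$ with $AeM = M$; admissibility of $V$ corresponds to a finiteness condition on $M$, and I will tacitly restrict $\Mod(\sH_l(\GL_N(L)))$ to the matching class (equivalently one may run the whole argument with all smooth representations and all modules, which is the essential point). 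The candidate quasi-inverse to $V\mapsto eV$ is then $N\mapsto Ae\otimes_{eAe}N$; here $Ae$ is nothing but the representation compactly induced from the trivial character of $\K_l(L)$.

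First I would check that the composite $N\mapsto e(Ae\otimes_{eAe}N)$ is canonically the identity: it is $(eAe)\otimes_{eAe}N = eAe\otimes_{eAe}N = N$. In particular $Ae\otimes_{eAe}N$ is generated by $e\otimes N$, hence by its $\K_l(L)$-fixed vectors, so the candidate functor does land in $\Rep_l(\GL_N(L))$. It then remains to prove that for every $V$ in $\Rep_l(\GL_N(L))$ the counit
\[
c_V\cln Ae\otimes_{eAe}eV\longrightarrow V,\qquad ae\otimes ev\longmapsto aev,
\]
is an isomorphism. Surjectivity is exactly the hypothesis that $V$ is generated by $eV = V^{\K_l(L)}$. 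For injectivity, put $N = \Ker c_V$; since $\K_l(L)$ is compact and we are in characteristic zero, the functor $(-)^{\K_l(L)} = e(-)$ is exact, so applying it to $0\to N\to Ae\otimes_{eAe}eV\to V\to 0$ and using the computation just made shows that $eN$ fits into an exact sequence with the identity map of $eV$; hence $eN = 0$.

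The step I expect to be the real obstacle is to conclude $N = 0$ from $eN = 0$ together with the fact that $N$ is a subrepresentation of $Ae\otimes_{eAe}eV$; equivalently, that $e = e_{\K_l(L)}$ is a \emph{special} idempotent of $A = \sH(\GL_N(L))$ --- a representation of the form $Ae\otimes_{eAe}M$ has no nonzero subrepresentation killed by $e$, or, what amounts to the same thing, $\Rep_l(\GL_N(L))$ is closed under subrepresentations. This is not a formal consequence of the idempotented-algebra formalism --- it already fails for a vertex idempotent of the path algebra of the $A_2$ quiver --- so here one genuinely uses the structure of $\GL_N$ over a $p$-adic field: either Bernstein's decomposition of $\Rep(\GL_N(L))$ into blocks, together with the observation that the $\K_l(L)$-spherical representations form a union of blocks, or a direct analysis of the geometry of $\K_l(L)\backslash\GL_N(L)$ and of the exhaustion of $Ae$ by its finitely generated subrepresentations. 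Granting this last point, $c_V$ is an isomorphism for all $V$ in $\Rep_l(\GL_N(L))$, the two functors $(-)^{\K_l(L)}$ and $Ae\otimes_{eAe}(-)$ are mutually quasi-inverse, and the lemma follows; for the write-up of the paper it is of course enough to cite \cite[Corollaire 3.9 (ii)]{Ber1984}.
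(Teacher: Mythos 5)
Your outline is correct, and it is the standard argument underlying the result: the paper itself offers no proof of this lemma, only the citation to Bernstein's Corollaire 3.9 (ii), and what you have written is precisely the reduction of that statement to the one genuinely nontrivial input, namely that $e_{\K_l(L)}$ is a special idempotent, i.e.\ that $\Rep_l(\GL_N(L))$ is closed under subquotients (a consequence of the Bernstein decomposition, and exactly the property the paper later invokes in the proof of Lemma \ref{corCdeBer}). Your formal steps (the composite $N\mapsto e(Ae\otimes_{eAe}N)=N$, surjectivity of the counit from the generation hypothesis, exactness of $e(-)$ in characteristic zero to get $e\Ker(c_V)=0$) are all sound, and you correctly flag that the passage from $eN=0$ to $N=0$ is where the structure theory enters rather than pure algebra; deferring that point to \cite[Corollaire 3.9 (ii)]{Ber1984} is exactly what the paper does.
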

	
	By using this, we can prove the following:
	\begin{lem}\label{corCdeBer}
		For $l \leq m$, the functor
		\begin{eqnarray*}
			\Mod(\sH_l(\GL_N(L))) & \to & \Mod(\sH_m(\GL_N(L))) \\
			W & \mapsto & (\sH_m(\GL_N(L)) *_m e_{\K_l(L)}) \tensor_{\sH_l(\GL_N(L))}W
		\end{eqnarray*}
		makes the diagram
		\[
			\xymatrix{
					\Rep_l(\GL_N(L)) \ar@{^(->}[r] \ar[d] & \Rep_m(\GL_N(L)) \ar[d] \\
					\Mod(\sH_l(\GL_N(L))) \ar[r] & \Mod(\sH_m(\GL_N(L)))
				}
		\]
		commute, where the two vertical arrows are the equivalences in Lemma \ref{CdeBer}
		and the top horizontal arrow is a natural injection.
	\end{lem}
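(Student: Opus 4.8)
The plan is to reduce everything to $\sH$-modules via Lemma~\ref{CdeBer} and then recognize the functor in the statement as a right-exact, direct-sum-preserving functor that is pinned down by its value on a free module. Write $\K_l(L)$, etc., and let $B_l\cln\Rep_l(\GL_N(L))\isomto\Mod(\sH_l(\GL_N(L)))$ be the equivalence $V\mapsto V^{\K_l(L)}$ of Lemma~\ref{CdeBer}, with a quasi-inverse $B_l^{-1}$. The inclusion $\iota\cln\Rep_l(\GL_N(L))\immto\Rep_m(\GL_N(L))$ is well defined, since $\K_m(L)\subset\K_l(L)$ forces a representation generated by its $\K_l(L)$-fixed vectors to be generated by its $\K_m(L)$-fixed vectors. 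Writing $G$ for the functor $W\mapsto(\sH_m(\GL_N(L))*_m e_{\K_l(L)})\tensor_{\sH_l(\GL_N(L))}W$ of the statement, the asserted commutativity amounts exactly to the existence of a natural isomorphism $G\cong G'$, where $G' = B_m\circ\iota\circ B_l^{-1}$; so the goal is to produce one.

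First I would build a natural transformation $\theta\cln G\Rightarrow G'$. For $V\in\Rep_l(\GL_N(L))$ one has $G'(V^{\K_l(L)})=V^{\K_m(L)}$, and $\theta_V$ should send $\phi\tensor v$ to a suitably normalized convolution $\phi\cdot v=\int_{\GL_N(L)}\phi(g)\pi(g)v\,d\mu_{\GL_N(L),m}(g)$; this lies in $V^{\K_m(L)}$ because every $\phi\in\sH_m(\GL_N(L))*_m e_{\K_l(L)}$ is $(\K_m(L),\K_l(L))$-biinvariant, and naturality in $V$ is clear. Pinning down the normalizing scalar so that $\theta_V$ is at once balanced over $\sH_l(\GL_N(L))$ and $\sH_m(\GL_N(L))$-linear is the one fiddly point; this is pure Haar-measure bookkeeping (the convolutions $*_l$ and $*_m$ differ by the index $[\K_l(L):\K_m(L)]$, and $e_{\K_l(L)}$, the unit of $\sH_l(\GL_N(L))$, is only $[\K_l(L):\K_m(L)]$ times an idempotent of $\sH_m(\GL_N(L))$), and I expect it to be the only real obstacle.

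Then I would show $\theta$ is an isomorphism by an Eilenberg--Watts-style comparison. Both $G$ and $G'$ are right exact and commute with arbitrary direct sums: for $G$ this is immediate, being a tensor functor; for $G'$ it follows from the same properties of the equivalences $B_l,B_m$ and of $\iota$, and $\iota$ is right exact and sum-preserving because $\Rep_l(\GL_N(L))$ is closed under quotients and under direct sums inside $\Rep_m(\GL_N(L))$ (a quotient, or a direct sum of copies, of representations generated by their $\K_l(L)$-fixed vectors is again of that kind). Next, $\theta$ is an isomorphism on the free module $\sH_l(\GL_N(L))$: on one side $G(\sH_l(\GL_N(L)))=\sH_m(\GL_N(L))*_m e_{\K_l(L)}$, while on the other $B_l^{-1}(\sH_l(\GL_N(L)))$ is the representation of $\GL_N(L)$ by left translation on the compactly supported, right-$\K_l(L)$-invariant functions $\GL_N(L)\to\C$, whose $\K_m(L)$-invariants are precisely the $(\K_m(L),\K_l(L))$-biinvariant compactly supported functions, i.e.\ $\sH_m(\GL_N(L))*_m e_{\K_l(L)}$ again, and under these identifications $\theta_{\sH_l(\GL_N(L))}$ is a nonzero scalar times the identity. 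Since $G$ and $G'$ preserve direct sums, $\theta$ is then an isomorphism on every free $\sH_l(\GL_N(L))$-module; for an arbitrary module $W$, applying $G$, $G'$ and $\theta$ to a free presentation $\sH_l(\GL_N(L))^{(J)}\to\sH_l(\GL_N(L))^{(I)}\to W\to 0$ and invoking the five lemma on the resulting diagram with exact rows shows that $\theta_W$ is an isomorphism. Finally, $\theta$ is compatible with the vertical equivalences by construction, which yields the commutativity of the square and finishes the argument.
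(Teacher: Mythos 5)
Your proposal is correct, and the comparison map you build is exactly the one the paper uses: the integral pairing sending $(h*_m e_{\K_l(L)})\otimes v$ to $\int_{\GL_N(L)}(h*_m e_{\K_l(L)})(g)\pi(g)v\,d\mu_{\GL_N(L),m}(g)$ in $V^{\K_m(L)}$. Where you genuinely diverge is in proving this map bijective. The paper argues directly on a fixed $V$: surjectivity because $V$ is generated by $V^{\K_l(L)}$, and injectivity by placing the kernel $\cN$ in the subcategory $\Mod_l(\sH_m(\GL_N(L)))$ of modules generated by their $e_{\K_l(L)}$-part --- stable under subquotients because it is equivalent to $\Rep_l(\GL_N(L))$, itself stable under subquotients by Bernstein --- and noting $\cN$ has trivial $e_{\K_l(L)}$-part, hence vanishes. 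You instead run an Eilenberg--Watts-style reduction: both functors are right exact and preserve direct sums, they agree on the rank-one free module (whose associated representation is left translation on compactly supported right-$\K_l(L)$-invariant functions, with $\K_m(L)$-invariants precisely $\sH_m(\GL_N(L))*_m e_{\K_l(L)}$), and a free presentation plus the five lemma finishes. Both routes rest on Lemma \ref{CdeBer}; the paper's additionally needs subquotient-stability of $\Rep_l$ inside $\Rep$, while yours needs only the easier closure under quotients and direct sums, at the price of the explicit computation of the free module's representation. The one point to nail down is the normalization you flag: since $\mu_{\GL_N(L),m}=[\K_l(L):\K_m(L)]\,\mu_{\GL_N(L),l}$ and $e_{\K_l(L)}*_m e_{\K_l(L)}=[\K_l(L):\K_m(L)]\,e_{\K_l(L)}$, the pairing as written is $\sH_l(\GL_N(L))$-balanced only after rescaling either the right action on $\sH_m(\GL_N(L))*_m e_{\K_l(L)}$ or the map itself; the paper's proof silently carries the same issue, and it is harmless.
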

	\begin{proof}
		The following proof is similar to that of \cite[Corollaire 3.9 (ii)]{Ber1984}.
		Throughout this proof, we put $G = \GL_N(L)$, $K_l = \K_l(L)$, $\sH_l = \sH_l(G)$ and $e_l = e_{K_l}$.
		Note that the $\C$-vector space $\sH_m *_m e_l$ has
		an $\sH_m$-$\sH_l$-bimodule structure via
		$
			(h_m,h_m'*_m e_l,h_l) \mapsto h_m*_m h_m' *_m h_l
		$
		for any $h_m,h_m'\in \sH_m$ and $h_l \in \sH_l$.
		Let  $(\pi,V)$ be any object of $\Rep_l(G)$.
		The map
		\[
			(\sH_m *_m e_l) \tensor_{\sH_l} V^{K_l} \to V^{K_m}
		\]
		defined by
		\[
			(h *_m e_l) \tensor v \mapsto  \int_G (h*_m e_l)(g)\pi(g)vd\mu_{G,m}(g)
		\]
		is a well-defined left $\sH_m$-module homomorphism.
		It suffices to show that this is an isomorphism.
		This is surjective since $\pi$ is an object of $\Rep_l(G)$.
		We denote by $\cN$ the kernel of the above homomorphism.
		Now let $\Mod_l(\sH_m)$ denote the full subcategory of $\Mod(\sH_m)$ consisting of objects $W$ which are generated by $e_l*W$. 
		Then the equivalence of categories of Lemma \ref{CdeBer} induces that of $\Rep_l(G)$ and $\Mod_l(\sH_m)$.
		This equivalence and Lemma \ref{CdeBer} imply that the latter is equivalent to $\Mod(\sH_l)$ and stable by sub-quotient.
		Since $\sH_m *_m e_l$ and $V^{K_m}$ are object of $\Mod_l(\sH_m)$, so is $\cN$.
		In addition, there is no non-trivial vectors on $\cN$ which is fixed by the left action of $e_l$.
		Therefore $\cN = 0$ and the above homomorphism is an isomorphism.
	\end{proof}

	Next we recall Kazhdan's theory.
	Let $F_1$ and $F_2$ be local fields of residual characteristic $p$ which are $l$-close.
	Let $\cO_i$ and $\fkp_i$ denote the ring of integers and the maximal ideal of $F_i$ $(i=1,2)$.
	Let $\alpha \cln \cO_2/\fkp_2^l \isomto \cO_1/\fkp_1^l$ be an isomorphism of rings.
	We fix a uniformizer $\vpi_2$ of $F_2$ and choose a lift $\vpi_1 \in \fkp_1$ of $\alpha(\vpi_2 \mod{\fkp_2})$.
	By the Cartan decomposition, the datum $(\alpha,\vpi_2,\vpi_1)$ gives a $\C$-linear isomorphism
	\[
		(\alpha,\vpi_2,\vpi_1)^* \cln \sH_l(\GL_N(F_1)) \isomto \sH_l(\GL_N(F_2))
	\]
	(see \cite{Kaz1986}).
	In Kazhdan's original paper, he showed that if $F_1$ and $F_2$ are sufficiently close then
	$(\alpha,\vpi_2,\vpi_1)^*$ is compatible with the convolution products.
	Lemaire showed a more precise result for $\GL_N$: 
	
	\begin{lem}[\rm{\cite[Proposition 3.1.1]{Lem2001}}]\label{Lem311}
		If $F_1$ and $F_2$ are $l$-close,
		the isomorphism $(\alpha,\vpi_2,\vpi_1)^*$ is compatible with the convolution products.
		Hence it is a $\C$-algebra isomorphism.
	\end{lem}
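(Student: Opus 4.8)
The plan is to compute each side of $(\alpha,\vpi_2,\vpi_1)^*$ in the basis of $\sH_l(\GL_N(F_i))$ given by the characteristic functions $e_C$ of the double cosets $C\in\K_l(F_i)\backslash\GL_N(F_i)/\K_l(F_i)$, and to show that $(\alpha,\vpi_2,\vpi_1)^*$ — which by construction permutes these basis vectors along a bijection $\Theta$ of double-coset sets — identifies the structure constants of the two convolution products. We may assume $l\geq 1$, so that $\K_l(F_i)$ is normal in $\GL_N(\cO_i)$ with quotient $\GL_N(\cO_i/\fkp_i^l)$, and in particular the residue fields are identified by $\alpha$.

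First I would make $\Theta$ explicit. By the Cartan decomposition $\GL_N(F_i)=\bigsqcup_\lambda\GL_N(\cO_i)\vpi_i^\lambda\GL_N(\cO_i)$, where $\lambda$ runs over the dominant cocharacters $\lambda_1\geq\cdots\geq\lambda_N$, a set depending only on $N$, every $\K_l(F_i)$-double coset lies in exactly one piece, and $\Theta$ sends the piece of $F_1$ attached to $\lambda$ to that of $F_2$ attached to $\lambda$ via $\vpi_1^\lambda\leftrightarrow\vpi_2^\lambda$. Inside a fixed piece the $\K_l(F_i)$-double cosets form a quotient of $\GL_N(\cO_i/\fkp_i^l)\times\GL_N(\cO_i/\fkp_i^l)$ by the equivalence relation induced from the parahoric subgroup $\GL_N(\cO_i)\cap\vpi_i^\lambda\GL_N(\cO_i)\vpi_i^{-\lambda}$; the reduction of that subgroup and of its $\vpi_i^\lambda$-twist are described purely by congruences modulo $\fkp_i^l$ that involve only $\lambda$, $N$ and the residue field. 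Hence $\alpha$ transports all of this piece by piece, and $\Theta$ is the resulting bijection; $(\alpha,\vpi_2,\vpi_1)^*$ is, by definition, the $\C$-linear map permuting the $e_C$ accordingly (so in particular it already sends $e_{\K_l(F_1)}$ to $e_{\K_l(F_2)}$).

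Next, for double cosets $A,B$ and any $g$ in a double coset $C$, the $e_C$-coefficient of $e_A*_le_B$ equals $\mu_{\GL_N(F_i),l}(A\cap gB^{-1})$, and since this set is right $\K_l(F_i)$-stable and $\mu_{\GL_N(F_i),l}(\K_l(F_i))=1$, this is the number of right cosets $x\K_l(F_i)\subseteq A$ with $x^{-1}g\in B$. Fixing Cartan representatives for $A$, $B$, $C$ (products of a matrix over $\cO_i$ with the relevant $\vpi_i^\lambda$), this count becomes the number of solutions of a finite system of conditions on matrix entries, and the whole task is to check that this number depends only on the level-$l$ data $\cO_i/\fkp_i^l$ together with the chosen uniformizer, and on the combinatorics ($N$, $l$, the cocharacters occurring). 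Granting that, $\alpha$ carries the solution set for $F_1$ bijectively onto that for $F_2$; since $\Theta$ was defined by exactly the same transport, we obtain $(\alpha,\vpi_2,\vpi_1)^*(e_A*_le_B)=(\alpha,\vpi_2,\vpi_1)^*(e_A)*_l(\alpha,\vpi_2,\vpi_1)^*(e_B)$, so $(\alpha,\vpi_2,\vpi_1)^*$ is a $\C$-algebra isomorphism.

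The main obstacle is precisely that last parenthetical: a priori, forming $x^{-1}g$ and testing membership in $B$ can involve entries of $F_i$ whose valuation exceeds $l$ in absolute value (the diagonal of $\vpi_i^\lambda$ is unbounded), so one must show that the representatives and conditions can always be arranged so that everything is already visible modulo $\fkp_i^l$. Establishing this — through an explicit parametrization of the $\K_l$-double cosets and of the convolution product for $\GL_N$ — is exactly the content of \cite[\S3]{Lem2001}, which sharpens Kazhdan's original ``sufficiently close fields'' bound to ``$l$-close''; with that parametrization in hand, the transport of the structure constants under $\alpha$, and hence the lemma, is formal. An alternative I would try is to present $\sH_l(\GL_N(F_i))$ by generators and relations — the finite group algebra $\C[\GL_N(\cO_i/\fkp_i^l)]$ together with the double cosets of the elements $\mathrm{diag}(\vpi_i,\ldots,\vpi_i,1,\ldots,1)$ and the inverse of the central one — and verify that every relation involves only data modulo $\fkp_i^l$; this reduces the statement to the same kind of finite computation.
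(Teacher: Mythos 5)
The paper does not prove this lemma; it is quoted from \cite[Proposition 3.1.1]{Lem2001}, so there is no in-paper argument to compare against. Your sketch correctly identifies the shape of Lemaire's argument: the map permutes the natural basis of characteristic functions of $\K_l$-double cosets along a bijection $\Theta$ built piecewise from the Cartan decomposition, and one must check that the structure constants of $*_l$ in that basis are determined by the truncated data $(\cO_i/\fkp_i^l,\vpi_i \bmod \fkp_i^{l+1})$. You also correctly locate the obstacle: writing the structure constant for $e_A*_l e_B$ as a count of right $\K_l$-cosets in $A\cap gB^{-1}$, one must justify that the count is visible modulo $\fkp_i^l$ even though the Cartan representatives involve unbounded powers of $\vpi_i$. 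But that obstacle is the entire content of the lemma. Kazhdan's original argument only gives compatibility when $F_1,F_2$ are $l'$-close for some unspecified $l'\gg l$; the point of Lemaire's proposition is precisely to sharpen this to $l$-close by a careful parametrization of $\K_l\backslash\GL_N(F_i)/\K_l$. Since your proposal explicitly defers that step to \cite[\S3]{Lem2001}, what you have written is an accurate outline of the strategy rather than a self-contained proof; the concrete missing piece is a verification (even in a sample case, say $\lambda$ dominant with entries in $\{0,1\}$) that the membership test $x^{-1}g\in B$ can be expressed in terms of residues mod $\fkp_i^l$ together with $\lambda$, $N$, and $l$ alone, which is where all of the arithmetic actually happens. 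The alternative via generators and relations that you mention has the same status: it, too, reduces to a finite computation that you do not carry out.
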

	
	Now we prove Theorem \ref{thm1}.
	Let $\Fom/F$ be an infinite APF extension.
	For any indices $1\leq n < m\leq \infty$, 
	we have 
	\[
		l_n \leq \frac{p-1}{p}\psi_{E/F}(b_n) = \frac{p-1}{p}i(F_{m}/F_n).
	\]
	Here, we use equalities $\psi_{E/F}(b_n) = i(F_{n+1}/F_n) = i(E/F_n)$ 
	(see \cite[1.4.1 (b)]{Win1983})
	and inequalities $i(E/F_n) \leq i(F_m/F_n) \leq i(F_{n+1}/F_n)$
	(see \cite[Proposition 1.2.3]{Win1983}).
	Thus the norm map with respect to $F_m/F_n$ induces an isomorphism of rings 
	\[
		\alpha_{m/n}\cln \cO_{F_m}/\fkp_{F_m}^{l_n} \isomto \cO_{F_n}/\fkp_{F_n}^{l_n}
	\]
	\cite[Proposition 2.2.1]{Win1983}.
	We fix a uniformizer $\vpi_m$ of $F_m$.
	By Lemma \ref{Lem311}, we obtain an isomorphism of $\C$-algebras
	\[
		\beta_{m/n}^* = (\alpha_{m/n},\vpi_m,N_{F_m/F_n}(\vpi_m))^* \cln \sH_{l_n}(\GL_N(F_n)) \isomto \sH_{l_n}(\GL_N(F_m)).
	\]	
	 By Lemma \ref{CdeBer}, this induces an equivalence of categories
	\[
		A_{m/n} \cln \Rep_{l_n}(\GL_N(F_n)) \isomto \Rep_{l_n}(\GL_N(F_m)).
	\]
	The transitivity of norm maps implies that the commutativity of the following diagram
	\[
		\xymatrix{
				\sH_{l_n}(\GL_N(F_n)) \ar[r]^{\beta_{m/n}^*} \ar[rd]_{\beta_{m'/n}^*} &
					\sH_{l_n}(\GL_N(F_m))  \ar[d]^{\beta_{m'/m}^*} \\
				 & \sH_{l_n}(\GL_N(F_{m'})).
			}
	\]	
	This and Corollary \ref{corCdeBer} show the commutativity of the diagram (\ref{diag1}).
	This completes the proof of Theorem \ref{thm1}.
	
	\section{Proof of Theorem \ref{thm2}}\label{proof2}
	In this section, we prove Theorem \ref{thm2}.
	
	Let $F$ be a mixed characteristic local field.
	We denote the local Langlands correspondence for $\GL_N$ over $F$ by
	\[
	\LLC_F \cln \sA(\GL_N(F)) \to \Phi(\GL_N(F)).
	\]
	By \cite[Proposition 4.2]{ABPS2014}, we have
	\[
		\LLC_F(\sA_l(\GL_N(F))) \subset \Phi_l(\GL_N(F)).
	\]	
	
	Let $F_1$ and $F_2$ be local fields with finite residue fields
	which are $l$-close.
	We choose a datum $\beta = (\alpha,\vpi_2,\vpi_1)$ as in Kazhdan's theory.
	Then we obtain Kazhdan's correspondence $\beta^*\cln \sA_l(\GL_N(F_1)) \isomto \sA_l(\GL_N(F_2))$.
	Moreover, from $\beta$ we can canonically define an isomorphism of triples $\gamma\cln \Tr_l(F_2) \to \Tr_l(F_1)$.
	The following compatibility of $\beta^*$ with $\gamma^*$ via the local Langlands correspondence was proved
	by Aubert, Baum, Plymen and Solleveld in their preprint \cite{ABPS2014}.
	
	\begin{thm}[{\cite[Theorem 6.1]{ABPS2014}}]\label{ABPSThm61}
		Let $l'$ be any integer such that $0 \leq l' < 2^{-N}l$.
		Then the following diagram is commutative:
		\[
			\xymatrix{
					\sA_{l'}(\GL_N(F_1)) \ar[r]^{\beta^*} \ar[d]^{\LLC} & \sA_{l'}(\GL_N(F_2)) \ar[d]^{\LLC} \\
					\Phi_{l'}(\GL_N(F_1)) \ar[r]^{\gamma^*} & \Phi_{l'}(\GL_N(F_2)).
				}
		\]
	\end{thm}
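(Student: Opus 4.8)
The plan is to deduce the theorem from Henniart's characterisation of the local Langlands correspondence by $\varepsilon$-factors of pairs, together with the known stability of $\varepsilon$- and $L$-factors under passage to a close local field. I would argue by induction on $N$.

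\textit{Base case and reduction to supercuspidals.}
For $N = 1$ both $\beta^{*}$ and $\gamma^{*}$ simply transport characters of $F_i^{\times}$ trivial on $1 + \fkp_i^{l'}$ along the isomorphism $F_1^{\times}/(1 + \fkp_1^{l'}) \isomto F_2^{\times}/(1 + \fkp_2^{l'})$ determined by $\alpha$ and by $\vpi_1 \leftrightarrow \vpi_2$, so the $N = 1$ case is the compatibility of local class field theory with Deligne's close-field equivalence \cite{Del1984}, which is classical. For general $N$, write $\pi \in \sA_{l'}(\GL_N(F_1))$ as a Langlands sum $\St_{m_1}(\sigma_1) \Lsum \cdots \Lsum \St_{m_r}(\sigma_r)$ with $\sigma_j$ supercuspidal of $\GL_{d_j}(F_1)$ and $\sum_j m_j d_j = N$. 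The depth of an irreducible representation of $\GL_N$ equals that of its supercuspidal support, so $\sigma_j \in \sA_{l'}(\GL_{d_j}(F_1))$, and $l' < 2^{-N} l \le 2^{-d_j} l$. Since Kazhdan's isomorphism is compatible with parabolic induction and with unramified twists (it matches $|\det|$ with $|\det|$ because $\vpi_1 \leftrightarrow \vpi_2$), hence with generalised Steinberg representations and Langlands quotients --- this is Lemaire's analysis of $\beta^{*}$ in \cite{Lem2001} --- one gets $\beta^{*}\pi = \St_{m_1}(\beta^{*}\sigma_1) \Lsum \cdots \Lsum \St_{m_r}(\beta^{*}\sigma_r)$. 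On the Galois side $\gamma^{*}$ touches only the Weil-group factor and commutes with direct sums and with $- \boxtimes \Sym^{m-1}\Std$, while $\LLC$ carries Langlands sums to direct sums and $\St_m(\sigma)$ to $\LLC(\sigma) \boxtimes \Sym^{m-1}\Std$. Thus the square for $\pi$ follows from the square for each $\sigma_j$, and we are reduced to a supercuspidal $\sigma \in \sA_{l'}(\GL_N(F_1))$.

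\textit{The supercuspidal case via $\varepsilon$-factors of pairs.}
Fix additive characters $\psi_i$ of $F_i$ of bounded level that correspond under $\alpha$; this is possible because the $\varepsilon$- and $L$-factors of representations of small depth see $F_i$ only modulo a bounded power of $\fkp_i$, for which $l' < 2^{-N} l$ leaves room. I would then prove the two parallel statements: (a) Kazhdan's isomorphism preserves the Rankin--Selberg factors of pairs, $\varepsilon(\beta^{*}\pi_1 \times \beta^{*}\pi_2, \psi_2, s) = \varepsilon(\pi_1 \times \pi_2, \psi_1, s)$ and likewise for $L$-factors, for representations $\pi_1, \pi_2$ over $F_1$ of sufficiently small depth (a bound comparable to $l$); and (b) Deligne's close-field map $\gamma^{*}$ preserves the $\varepsilon$- and $L$-factors of pairs of Weil-group representations, which is part of the close-field formalism of \cite{Del1984}. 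Granting these, one chases $\varepsilon$- and $L$-factors around the commutative square of $\LLC$ over $F_1$ --- using (a), (b), the defining property of $\LLC_{F_1}$, and, for a twisting representation of $\GL_{N'}$ with $N' < N$, the induction hypothesis --- to conclude that $\LLC_{F_2}(\beta^{*}\sigma)$ and $\gamma^{*}\LLC_{F_1}(\sigma)$ have equal $\varepsilon$- and $L$-factors against every irreducible representation of $W_{F_2}$ of dimension $< N$ and small level, and equal determinant by the $N = 1$ case. A finite-level form of the local converse theorem for $\GL_N$ --- available precisely because all the factors in play are ``of level $l$'', hence determined by the triples $\Tr_l(F_i)$ --- then forces $\LLC_{F_2}(\beta^{*}\sigma) = \gamma^{*}\LLC_{F_1}(\sigma)$, completing the induction.

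\textit{The main obstacle.}
The heart of the matter is statement (a): that $\beta^{*}$ respects Rankin--Selberg factors. This is delicate because $\beta^{*}$ is built from the Cartan decomposition and is not visibly compatible with Whittaker data, whereas the Jacquet--Piatetski-Shapiro--Shalika zeta integrals live on the mirabolic subgroup and are written in terms of Whittaker, or Kirillov, functions. I would address it by noting that for representations of depth $< l'$ and $\psi_i$ of bounded level these zeta integrals are supported on a subset that is compact modulo the centre and ``of level $l$'', so they can be rewritten as pairings on the Hecke modules $\sH_l(\GL_{N_i}(F_i))$; it then remains to see that the Whittaker normalisation is carried through by $\beta^{*}$, which, via the Bernstein--Zelevinsky theory of derivatives and the Kirillov model --- whose structure in bounded depth is defined over $\cO_i/\fkp_i^{l}$ --- reduces to the comparison already carried out in \cite{Lem2001}. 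This step is where essentially all the work lies, and the propagation of these depth bounds through the induction is what pins down the numerical constant $2^{-N}$ in the hypothesis $l' < 2^{-N} l$.
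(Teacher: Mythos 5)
The paper offers no proof of this statement: it is imported wholesale as Theorem 6.1 of \cite{ABPS2014} and used as a black box, so there is no internal argument to compare yours against. Your reconstruction does, however, follow essentially the strategy of the cited source and of the surrounding literature (Badulescu, Henniart--Lemaire, Ganapathy): reduce to supercuspidals using the compatibility of $\beta^{*}$ with parabolic induction, unramified twists and Langlands quotients; on the Galois side use Deligne's theorem that $\varepsilon$- and $L$-factors of Weil-group representations of bounded ramification are functions of the triple $\Tr_l(K)$, so that $\gamma^{*}$ preserves them; and then characterise $\LLC_{F_2}(\beta^{*}\sigma)$ by Henniart's converse theorem for pairs, with the constant $2^{-N}$ absorbing the loss of depth through the inductive twisting. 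The architecture is sound and is the right one.

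The caveat is that your step (a) --- preservation of Rankin--Selberg $\varepsilon$- and $L$-factors of pairs under the Kazhdan--Lemaire isomorphism at controlled depth --- is not a remark one can dispatch by ``rewriting the zeta integrals as pairings on the Hecke modules''; it is itself a substantial theorem (this is the content of the Henniart--Lemaire and Ganapathy--Varma work on transfer of gamma factors between close local fields), and your paragraph only names the difficulties (Whittaker normalisation, Kirillov models over $\cO_i/\fkp_i^{l}$, propagation of the depth bounds) without resolving them. As a blind proof the proposal is therefore an accurate road map rather than a complete argument, but since the paper itself treats the statement as external input, the honest conclusion is that you have correctly identified where the real work in \cite{ABPS2014} lies rather than replaced it.
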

	
	Now let us prove Theorem \ref{thm2} (i).
	By Theorem \ref{ABPSThm61}, the map $\beta_{m/n}^*$ in Section \ref{proof1} is compatible
	with the map $\fkN_{F_m/F_n}^*$ in Lemma \ref{ResDel2} via $\LLC$.
	Now we have inequalities
	$\l_n' \leq 2^{-N} p^{-1} (p-1) i(F_m/F_n) \leq (2p)^{-1}(p-1) i(F_m/F_n)$.
	Hence, by Lemma \ref{ResDel2}, the map $\fkN_{F_m/F_n}^*$ coincides with
	the map induced by the restriction $W_{F_m} \immto W_{F_n}$.
	Since the latter map is compatible with $\BC_{m/n}$ via $\LLC$,
	we have completed the proof.
	
	Next we show Theorem \ref{thm2} (ii).
	By the local Langlands correspondence and Theorem \ref{ABPSThm61},
	this is also reduced to showing the corresponding assertion on Galois representations.
	Thus we shall show that for any $\phi\in\Phi(\GL_N(F))$ there exists $n$ such that
	$\phi|_{W_{F_n}} \in \Phi_{l_n'}(\GL_N(F_n))$.
	Take any $\phi\in \Phi(\GL_N(F))$.
	Then we have $\phi\in \Phi_l(\GL_N(F))$ for some $l$.
	By the equality (\ref{Herbrand}) in Section \ref{KeyLemma}, we have
	$W_{F_n} \cap \Gal(\Fb/F)^l = \Gal(\Fb/F_n)^{\psi_{\Fom/F}(l)}$ for any $n$.
	Since $l_n' \to \infty$ as $n \to \infty$,
	there exists an integer $n$ such that $\psi_{\Fom/F}(l) \leq l_n'$.
	Thus $\phi|_{W_{F_n}}$ is trivial on $\Gal(\Fb/F_n)^{l_n'}$
	i.e. $\phi|_{W_{F_n}} \in \Phi_{l_n'}(\GL_N(F_n))$, as claimed.
	
	\section{Proof of Theorem \ref{thm3}}\label{proof3}
	Finally, we prove Theorem \ref{thm3}.
	First, we show (i) for a supercuspidal $\pi$.
	We put $\pi_\infty = \BC_{\infty/0}(\pi)$.
	The fiber $\BC_{\infty/0}^{-1}(\pi_\infty)$ has a $\Gamh$-set structure via
	$\pi' \mapsto \pi'\tensor \eta$, where $\pi'\in \BC_{\infty/0}^{-1}(\pi_\infty)$ and $\eta \in \Gamh$.
	We shall show that this action is simply transitive.
	The assumption $(p,N)=1$ shows that the $\Gamh$-action is simple.
	Let us prove the transitivity.
	We take any $\pi' \in \BC_{\infty/0}^{-1}(\pi_\infty)$.
	By Theorem \ref{thm2} (ii), we can take an integer $n$ such that
	both $\BC_{n/0}(\pi)$ and $\BC_{n/0}(\pi')$ belong to $\sA_{l_n}(\GL_N(F_n))$.
	Since $\BC_{\infty/0} = A_{\infty/n}\circ\BC_{n/0}$ and $A_{\infty/n}$ is injective,
	we have $\BC_{n/0}(\pi) = \BC_{n/0}(\pi')$.
	It suffices to show that there exists a smooth character $\eta\cln F^\times \to \C^\times$
	which factors through $F^\times/N_{F_n/F}(F_n^\times)$ such that $\pi' \simeq \pi\tensor\eta$.
	We show this by induction on $n$.
	The case $n=1$ is \cite[Chapter 1, Proposition 6.7]{AC1989}.
	We assume that the assertion holds for $n-1$.
	By the case $n=1$, we can find a smooth character $\eta_1\cln F_{n-1}^\times \to \C^\times$
	which factors through $F_{n-1}^\times/N_{F_n/F_{n-1}}(F_n^\times)$ and satisfies
	$\BC_{n-1/0}(\pi') \simeq \BC_{n-1/0}(\pi)\tensor \eta_1$.
	Let $\omega_\pi$ (resp. $\omega_{\pi'}$) denote the central character of $\pi$ (resp. $\pi'$).
	Then we have
	\[
	 \omega_{\pi'}\circ N_{F_{n-1}/F} = (\omega_\pi\circ N_{F_{n-1}/F}) \eta_1^N.
	\]
	Thus we obtain
	\[
		\eta_1^N = (\omega_{\pi'}\omega_\pi^{-1})\circ N_{F_{n-1}/F}.
	\]
	By the assumption that $(p,N)=1$, we find a character $\eta_1'$ on $F^\times$ such that
	\[
		\eta_1 = \eta_1'\circ N_{F_{n-1}/F}.
	\]
	Hence we have $\BC_{n-1/0}(\pi') \simeq \BC_{n-1/0}(\pi\tensor \eta_1')$ and by the induction hypothesis
	there exists a smooth character $\eta_{n-1}$ on $F^\times$ which is trivial on $N_{F_{n-1}/F}(F_{n-1}^\times)$ and satisfies
	$\pi' \simeq \pi\tensor(\eta_1'\eta_{n-1})$.
	Then $\eta = \eta_1'\eta_{n-1}$ is the requested character.
	
	Taking central character maps $\pi\tensor \eta$ to the character $\omega_\pi \eta^N$.
	By the assumption $(p,N)=1$, this gives a bijection
	$\BC_{\infty/0}^{-1}(\pi_\infty) \to \BC_{\infty/0}^{-1}(\omega_\infty)$.
	
	Now we show (i) for any essentially square-integrable $\pi$.
	Then there exist a unique divisor $m$ of $N$ and a unique supercuspidal representation $\sigma \in \sA(\GL_{N/m}(F))$ such that $\pi$ is equivalent to the unique irreducible quotient $\St_m(\sigma)$ of
	\[
		\nInd_{P(N/m,\ldots,N/m)}^{\GL_N(F)}(\sigma\otimes|\det|^{(1-m)/2}\boxtimes\cdots\boxtimes
			\sigma\otimes|\det|^{(m-1)/2})
	\]
	(\cite[Theorem 9.3]{Zel1980}).
	We put $\sigma_\infty = \BC_{\infty/0}(\sigma)$.
	Let us show that the map
	\begin{eqnarray*}
		\BC_{\infty/0}^{-1}(\sigma_\infty) & \to & \BC_{\infty/0}^{-1}(\pi_\infty) \\
		\sigma' & \mapsto & \St_m(\sigma')
	\end{eqnarray*}
	is bijective.
	Its well-definedness follows from \cite[Lemma 6.12]{AC1989},
	\cite[Th\'eor\`eme 2.17 (c)]{Bad2002} and \cite[Proposition A.4.1]{HL2006}.
	Its injectivity follows from the uniqueness of the expression $\St_m(\sigma')$.
	We show its surjectivity.
	Take any $\pi' \in \BC_{\infty/0}^{-1}(\pi_\infty)$.
	Then by \cite[Th\'eor\`eme 2.17 (c)]{Bad2002} and \cite[Proposition A.4.1]{HL2006},
	we have $\BC_{n/0}(\pi') = \St_m(\BC_{n/0}(\sigma))$ for some $n$.
	Since $\pi'$ is essentially square-integrable, there exists a divisor $m'$ of $N$ and a supercuspidal representation $\sigma'\in \sA(\GL_{N/m'}(F))$ such that $\pi' = \St_{m'}(\sigma')$.
	The assumption $(p,N)=1$ and \cite[Lemma 6.12]{AC1989} show that
	$\BC_{n/0}(\pi') = \St_{m'}(\BC_{n/0}(\sigma'))$.
	Hence we have $m'=m$ and
	$\sigma' \in \BC_{n/0}^{-1}(\BC_{n/0}(\sigma)) \subset \BC_{\infty/0}^{-1}(\sigma_\infty)$.
	Therefore the surjectivity follows, as claimed.
	
	The statement (ii) follows from the uniqueness of the Langlands sum
	and the fact that the functor $A_{\infty/n}$ preserves the Langlands sum (\cite[Proposition A.4.1]{HL2006}).
	
	
	\section*{Acknowledgments}
	The author would like to thank his supervisor Professor Yuichiro Taguchi
	for suggesting the problem and giving many stimulating comments.
	He is also grateful to Professor Yoichi Mieda
	for useful discussion, in particular pointing out the importance of Theorem \ref{thm2} (i).
	This study was supported by JSPS KAKENHI, Grant-in-Aid for JSPS fellows, Grant number 13J04056.

\end{document}